\newcommand{\marginparstretch}{0.6}
\let\oldmarginpar\marginpar
\renewcommand\marginpar[1]{\-\oldmarginpar[\framebox{\setstretch{\marginparstretch}\begin{minipage}{\marginparwidth}{\raggedleft\tiny #1}\end{minipage}}]{\framebox{\setstretch{\marginparstretch}\begin{minipage}{\marginparwidth}{\raggedright\tiny #1}\end{minipage}}}}
\date{\today}
\theoremstyle{plain}
\newtheorem{teo}{Theorem}[section]
\newtheorem{lema}[teo]{Lemma}
\newtheorem{prop}[teo]{Proposition}
\newtheorem{coro}[teo]{Corollary}
\theoremstyle{definition}
\newtheorem{defi}[teo]{Definition}
\newtheorem{obs}[teo]{Remark}
\newtheorem{ejem}[teo]{Example}
\def\Ext{\mathop{\rm Ext}\nolimits}
\newcommand{\repdim}{\operatorname{rep.dim.}\,}
\newcommand{\ind}{\operatorname{ind}}
\newcommand{\Gen}{\operatorname{Gen}}
\newcommand{\cogen}{\operatorname{Cogen}}
\renewcommand{\mod}{\operatorname{mod}}
\newcommand{\Y}{\operatorname{\mathcal{Y}}}
\renewcommand{\L}{\operatorname{\mathcal{L}}}
\newcommand{\R}{\operatorname{\mathcal{R}}}
\newcommand{\add}{\operatorname{add}}
\newcommand{\pd}{\operatorname{pd}}
\newcommand{\id}{\operatorname{id}}
\newcommand{\gldim}{\operatorname{gl.dim.}}
\newcommand{\Hom}{\operatorname{Hom}}
\newcommand{\End}{\operatorname{End}}
\newcommand{\Ann}{\operatorname{Ann}}
\newcommand{\findim}{\operatorname{fin.dim.}\,}
\newcommand{\wrepdim}{\operatorname{w.rep.dim.}\,}
\newcommand{\rep}[1]{%
  {%
    \tiny%
    \begin{matrix}%
      #1%
    \end{matrix}%
  }%
}
\title[Algebras determined by $\tau$-slices]{Algebras determined by $\tau$-slices}
\author[V. Gubitosi]{Viviana Gubitosi}
\address{Instituto de Matem\'{a}tica y Estad\'{\i}stica Rafael Laguardia, Facultad de Ingenier\'{\i}a - UdelaR, Montevideo, Uruguay, 11200 }
\email{gubitosi@fing.edu.uy}
\author[H. Treffinger]{Hipolito Treffinger}
\address{Universidad de Buenos Aires. Facultad de Ciencias Exactas y Naturales. Departamento de Matemática. Buenos Aires, Argentina.
CONICET - Universidad de Buenos Aires. Instituto de Investigaciones Matemáticas “Luis A. Santaló”  (IMAS). Buenos Aires, Argentina.}
\email{htreffinger@dm.uba.ar}
\begin{document}

\begin{abstract}
In this paper we revisit the notion of strict laura algebras through the lens of $\tau$-tilting theory to define the family of algebras determined by $\tau$-slices.
We show that the representation dimension of every algebra determined by $\tau$-slices satisfying mild conditions is at most three.
\end{abstract}

\maketitle

\section{Introduction} 
Let $A$ be a finite-dimensional algebra over an algebraically closed field $k$, let $\mod A$ be the category of finitely presented (right) $A$-modules and $\ind A$ the set of isomorphism classes of indecomposable $A$-modules.
The notions of left and right parts of a module category, which are denoted by $\L_A$ and $\R_A$ respectively, were introduced in \cite{HRS} to study and characterize quasi-tilted algebras.
Then they were used in \cite{AssemCoelho} to define the family of laura algebras, these are algebras that have only finitely many isomorphism classes of indecomposable objects that do not belong to the left or right part of its module category. 
Indeed, the name of this family can be explained by its definition: an algebra $A$ is laura if $\L_A\cup\R_A$ is cofinite in $\ind A$.

A laura algebra $A$ is said to be strict if $A$ is not quasi-tilted. 
If $A$ is a strict laura algebra then it was shown in \cite{ACT} that both $\L_A$ and $\R_A$ are functorially finite. 
Under these hypotheses, $\L_A$ is co-generated by a module $E$ which is an additive generator of the category of $\Ext$-injective objects in $\L_A$ and, dually, $\R_A$ is generated by a module $F$ which is an additive generator of $\Ext$-projective objects in $\R_A$.
Later, in \cite{Assem1} it was shown that $E$ and $F$ have the structure of a left section and a right section, respectively. 
These are modules with certain homological properties that induce some particular combinatorial structures in the Auslander-Reiten quiver $\Gamma(A)$ of $A$.
For a general overview on the representation theory of laura algebras we refer the reader to \cite{Surveylaura}.

The story of slices and sections in representation theory (at least in the sense that we mean here) starts with \cite{HappelRingel} where they introduced the notion of a slice module and showed that an algebra $A$ had a slice module $T$ if and only if $A$ is a tilted algebra, that is, the endomorphism algebra of a tilting module $T'$ over a hereditary algebra $H$. 
The sections were then introduced independently in \cite{Liu, Skowronski} where they gave a criterion to determine if an algebra is tilted or not depending on the existence of a complete section in its Auslander-Reiten quiver.
This notion was later generalized to the double sections where it was used to characterize double tilted algebras, also known as shod algebras. 
In the same spirit, the left and right sections were introduced and studied in \cite{Assem1}.
It is worth noting that a common theme among all these notions of sections is that all of them are related to a tilting module over a suitable tilted algebra. 

With the appearance of cluster algebras \cite{FZ}  started a program to categorify them using the representation theory of finite-dimensional algebras. 
In this context, a new family of algebras gained relevance: the family of cluster-tilted algebras \cite{BMR}. 
In a series of papers \cite{ABS1, ABS2, ABS3, ABS4} it was shown that the representation theory of cluster tilted algebras is deeply connected with the representation theory of tilted algebras. 
For example, 
it was shown in \cite{ABS2} that a cluster tilted algebra $A$ is the relation extension of a tilted algebra $C$ if and only if there is a local slice $T$ in $\mod A$ such that $C$ is isomorphic to $A/\Ann T$.

In parallel, the study of cluster algebras through representation theory unveiled a series of phenomena that instigated the study of several new topics that today are central in representation theory.
One of these topics is $\tau$-tilting theory, whose study was first axiomatized in \cite{AIR}, see also \cite{DF}.
It was a matter of time until a notion of slice was introduced in $\tau$-tilting theory.
This took place in \cite{Treffinger} where the definition of $\tau$-slice was given and the basic properties of these modules were studied. 

In this paper we define the family of algebras determined by $\tau$-slices based on the properties of the categories generated and cogenerated by two finite sets of $\tau$-slices $\{T_1, \cdots, T_s\}$ and $\{S_1, \cdots, S_t\}$, respectively, satisfying some technical conditions, see Definition~\ref{def:strictcycliclaura}.
In our first result, we show that this definition generalizes that of strict laura algebras. 

\begin{teo}[Theorem~\ref{strict laura is stric cyclic laura}]
Every strict laura algebra $A$ is determined by $\tau$-slices. 
\end{teo}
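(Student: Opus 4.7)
The strategy is to exhibit, for a given strict laura algebra $A$, the two finite collections of $\tau$-slices required by Definition~\ref{def:strictcycliclaura} and then to verify that they satisfy the technical conditions stated there. The natural candidates are furnished by the classical structure theory recalled in the introduction: $\L_A$ is cogenerated by a module $E$ that carries the structure of a left section, and $\R_A$ is generated by a module $F$ that carries the structure of a right section, both available thanks to the functorial finiteness proved in \cite{ACT} and the section structure established in \cite{Assem1}. I would therefore decompose $E$ and $F$ along the connected components of their respective sections, write $E = E_1 \oplus \cdots \oplus E_t$ and $F = F_1 \oplus \cdots \oplus F_s$, and propose the $F_i$ as the family $\{T_1,\ldots,T_s\}$ that generates and the $E_j$ as the family $\{S_1,\ldots,S_t\}$ that cogenerates.

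The first step is to show that each $E_j$ and each $F_i$ is a $\tau$-slice in the sense of \cite{Treffinger}. The cleanest route is to invoke directly the characterization of $\tau$-slices proved in that paper, which rephrases the homological and combinatorial requirements of a (left or right) section in the language of $\tau$-tilting theory. Concretely, the $\Ext$-injectivity of $E$ inside $\L_A$ and the $\Ext$-projectivity of $F$ inside $\R_A$, combined with the Auslander--Reiten-theoretic conditions defining a section in \cite{Assem1}, should translate into the support $\tau$-rigidity property that characterizes a $\tau$-slice.

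Once each summand is identified as a $\tau$-slice, the next step is to match the data with the hypotheses of Definition~\ref{def:strictcycliclaura}. That $\L_A = \cogen(E)$ and $\R_A = \Gen(F)$ will supply the cogeneration and generation conditions, while the assumption that $A$ is strict, that is, not quasi-tilted, ensures that neither part exhausts $\mod A$, so both families are genuinely present and non-degenerate.

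The main obstacle will be verifying the remaining technical conditions in Definition~\ref{def:strictcycliclaura} that regulate the interaction between the two collections of $\tau$-slices: for instance, vanishing of $\Hom$ or $\Ext$ between $\cogen(E_j)$ and $\Gen(F_i)$, or restrictions on how the two families sit relative to each other in the Auslander--Reiten quiver $\Gamma(A)$. To clear this hurdle one should exploit the finiteness of $\ind A \setminus (\L_A \cup \R_A)$ together with the well-understood structure of the non-semiregular connecting component that organizes the modules lying between $\L_A$ and $\R_A$ in a strict laura algebra, as catalogued in \cite{Surveylaura}.
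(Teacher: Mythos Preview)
Your overall strategy matches the paper's: take $E$ and $F$, break them into connected pieces, show each piece is a $\tau$-slice, and then verify Definition~\ref{def:strictcycliclaura}. The paper carries out exactly this plan, and your identification of $\L_A=\cogen E$, $\R_A=\Gen F$ with the required generation/cogeneration data is correct.

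There is, however, a genuine gap in your final paragraph. You describe the ``remaining technical conditions'' as vanishing of $\Hom$ or $\Ext$ between the two families, or positional constraints in $\Gamma(A)$, and you propose to handle them via the finiteness of $\ind A\setminus(\L_A\cup\R_A)$ and the structure of the connecting component. But the substantive condition in Definition~\ref{def:strictcycliclaura} is not of that shape: condition~$(3)$ demands that for every $X\in\Gen T_k$, any map from $L\in\add(A\oplus DA\oplus\bigcup_{i\neq k}T_i)$ (with no injective summand in $\Gen T_k$) to $X$ factors through $\add T_k$. Nothing in the cofiniteness of $\L_A\cup\R_A$ or the shape of the connecting component delivers this factorisation.

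The paper's mechanism is different and should be named. By \cite[Theorem~B]{Assem1}, $F$ is a complete slice in the tilted quotient $A_\rho=A/\Ann F$, and the successor categories of $F$ in $\mod A$ and in $\mod A_\rho$ coincide. The required factorisation through $\add F$ is then the standard slice-factorisation property in a tilted algebra. Similarly, your route to the $\tau$-slice property for each $F_i$ is left as an appeal to an unspecified ``characterisation'' in \cite{Treffinger}; the paper instead proves this directly (Proposition~\ref{prop: F are tau-slices}) by again passing to $A_\rho$, where $F_i$ is a genuine slice and hence $\tau$-rigid, and then checking the presection conditions in $\Gamma(A)$ using that $\Sigma$ is a right section and convex. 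Both steps hinge on the tilted quotient $A_\rho$, which your outline never mentions.
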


We also show that our definition is a proper generalization of strict laura algebras by giving an explicit example of an algebra determined by $\tau$-slices which is not strict laura, see Example~\ref{strict cyclic laura algebra not strict laura}. 
It is worth noting that the algebra $A$ of Example~\ref{strict cyclic laura algebra not strict laura} is a wild algebra having every indecomposable $A$-module lying in a cycle of $\mod A$. 
\medskip

The representation dimension, introduced in \cite{Au1}, serves as an invariant for finite-dimensional algebras, designed to measure their distance from being of finite representation type. 
Specifically, an algebra is semisimple precisely when its representation dimension is zero. 
No algebra has representation dimension equal to one. 
For non-semisimple algebras, a representation dimension of two indicates finite representation type, while a value of at least three signifies infinite type.

However, the full meaning and the properties of this invariant are far from understood except in the very lowest dimensions.
For example, a still unproven conjecture states that the representation dimension of an algebra of tame representation type is at most three. 
Thirty-two years after the introduction of this invariant, it was shown in \cite{Iyama} that it is always finite.
Later, it was discovered that algebras could have a representation dimension greater than three. 
For instance, in \cite{Ro1} it is proved that the representation dimension of the exterior algebra on a $d$-dimensional vector space is $d+1$. 
Other examples of algebras with large representation dimension were given in \cite{Bergh}, \cite{BerghOppermann1}, \cite{KrauseKussin}, \cite{Op1}, \cite{Op2}, \cite{Op3}, \cite{OppermannMiemietz}. 
Note, however, that the exact value of the representation dimension is known in only a few cases, and there is no method to calculate an effective upper bound.

Numerous significant classes of algebras have been demonstrated to possess a representation dimension of at most three (e.g., \cite{APT, APT2,AST, CP, EHI, Saorin, GT}). 
In this paper, we establish an upper bound for the representation dimension of algebras determined by $\tau$-slices satisfying mild conditions.

\begin{teo}[Theorem~\ref{teo principal}]
Let $A$ be an algebra determined by $\tau$-slices.
If $\bigcup_{j=1}^t \cogen S_j$ is contained in $\L_A$ or $\bigcup_{i=1}^s \Gen T_i$ is contained in $\R_A$ then 
$\repdim A \leq 3$. 
\end{teo}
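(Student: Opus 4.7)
The plan is to apply Auslander's criterion: to prove $\repdim A \leq 3$ it suffices to construct a generator-cogenerator $M$ of $\mod A$ such that every indecomposable $X \in \ind A$ admits a short exact sequence
\begin{equation*}
0 \to M_1 \to M_0 \xrightarrow{f} X \to 0
\end{equation*}
with $M_0, M_1 \in \add M$ and $f$ a right $\add M$-approximation, for then $\gldim \End_A(M) \leq 3$. Since the standard duality $D = \Hom_{\K}(-,\K)$ interchanges $\cogen$ with $\Gen$ and $\L_A$ with $\R_{A^{\mathrm{op}}}$, the two hypotheses of the theorem are mutually dual, and it suffices to deal with the case $\bigcup_{j=1}^t \cogen S_j \subseteq \L_A$.

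We propose the generator-cogenerator
\begin{equation*}
M \;=\; A \oplus DA \oplus \bigoplus_{i=1}^s T_i \oplus \bigoplus_{j=1}^t S_j \oplus N,
\end{equation*}
in which $N$ is the direct sum of a complete set of representatives of the indecomposables lying neither in $\bigcup_j \cogen S_j$ nor in $\bigcup_i \Gen T_i$. A first task is to prove that this exceptional class is finite, so that $M$ is finite-dimensional. This should follow from Definition~\ref{def:strictcycliclaura} by an argument parallel to the one used for strict laura algebras (Theorem~\ref{strict laura is stric cyclic laura}), where the analogous statement is that $\ind A \setminus (\L_A \cup \R_A)$ is finite.

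With $M$ fixed, we construct the approximation sequence for each indecomposable $X$ in three cases. If $X$ is a summand of $N$, the sequence is trivial. If $X \in \bigcup_j \cogen S_j$, the hypothesis places $X$ in $\L_A$, so $\pd_A X \leq 1$, and the minimal projective presentation $0 \to \Omega X \to P \to X \to 0$ has both middle terms in $\add A \subseteq \add M$; verifying that $P \to X$ is a right $\add M$-approximation is done by combining the vanishing of $\Hom$ from modules outside $\L_A$ into $\L_A$ with the $\tau$-rigidity of the $S_j$. If $X \in \bigcup_i \Gen T_i$, we instead take a minimal right $\add(A \oplus \bigoplus_i T_i)$-approximation of $X$, which is surjective because $A$ is a generator, and use the $\tau$-slice properties to show that its kernel decomposes into summands of $A$ and of the $T_i$.

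The principal obstacle lies in this last case. Since we only assume $\cogen S_j \subseteq \L_A$ and not its dual, the modules in $\Gen T_i$ are not \emph{a priori} in $\R_A$, so we do not have free access to the injective dimension bound $\id_A X \leq 1$. Consequently, controlling the syzygy of $X$ in the third case forces us to make direct use of the combinatorial and $\tau$-tilting-theoretic structure built into the definition of an algebra determined by $\tau$-slices: the interaction of the two families $\{T_i\}$ and $\{S_j\}$ along cycles in $\Gamma(A)$ is what ultimately guarantees that the syzygy lies in $\add M$ and that the constructed morphism is an $\add M$-approximation, not merely an approximation by the smaller class.
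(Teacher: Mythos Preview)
Your generator-cogenerator $M$ is exactly the one the paper uses, and the treatment of $X\in\bigcup_j\cogen S_j$ via its projective presentation (using $\pd_A X\le 1$) is also the paper's argument. Two corrections, one small and one substantial.

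\medskip
\textbf{Minor.} The finiteness of the exceptional class is not something you have to prove: it is condition~$(2)$ of Definition~\ref{def:strictcycliclaura}. The category $\Y_A$ is \emph{defined} to have only finitely many indecomposables, so $M$ is automatically finitely generated.

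\medskip
\textbf{Main gap: the case $X\in\bigcup_i\Gen T_i$.} Here your proposal is too vague to succeed, and the concrete mechanism you are missing is precisely the content of condition~$(3)$ in Definition~\ref{def:strictcycliclaura} together with the reduction to a tilted algebra. The paper does \emph{not} take an $\add(A\oplus\bigoplus_i T_i)$-approximation and then analyse its kernel via ``$\tau$-slice properties'' or ``interaction along cycles''. Instead it proceeds as follows. Start with a right $\add M$-approximation $g:M_X\to X$ and use condition~$(3)$ to single out an index $k$ with the property that every map into $X$ from a summand of $M$ that is not already in $\Gen T_k$ (projectives outside $\Gen T_k$, injectives outside $\Gen T_k$, the other $T_i$'s, objects of $\Y_A$) factors through some object of $\add T_k$. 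This replaces $g$ by an approximation $[h,s]:N_X\oplus S_X\to X$ whose domain lies in $\add(T_k\oplus DA)$, with the injective part already in $\Gen T_k$. At this point one invokes the fact (from \cite{Treffinger}) that $A/\Ann T_k$ is a \emph{tilted} algebra having $T_k$ as a complete slice, so that the situation is entirely inside $\mod A/\Ann T_k$, and then applies Lemma~\ref{lem:APT} (the Assem--Platzeck--Trepode result) to conclude that $\ker[h,s]\in\add T_k\subset\add M$.

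In other words, the kernel is controlled not by general $\tau$-rigidity considerations but by passing to a genuine tilted quotient and quoting the classical result for tilted algebras. Without condition~$(3)$ you have no way to force the approximating object into $\add(T_k\oplus DA)$ for a \emph{single} $k$, and without that reduction Lemma~\ref{lem:APT} does not apply. Your outline also omits $DA$ from the approximating class in this case; injectives lying in $\Gen T_k$ can map nontrivially to $X$ and must be retained.
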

\noindent A corollary, the finitistic dimension conjecture holds for algebras determined by $\tau$-slices satisfying the conditions of the previous theorem.
\medskip

The paper is organized as follows. 
In Section 2, we give the necessary preliminaries for the paper.
In Section 3, we define the family of algebras determined by $\tau$-slices. 
In the same section, we prove that every strict laura algebra is determined by $\tau$-slices. 
Moreover, we give an example of an algebra determined by $\tau$-slices which is not strict laura. 
Finally, in Section 4, we study the representation dimension of algebras determined by $\tau$-slices.

\section{Preliminaries}

\subsection{Background and notation.}
Throughout this paper $A$ is a finite-dimensional algebra over an algebraically closed field. 
For an algebra $A$, we denote by $\mod A$ the category
of finitely generated right $A$-modules and by $\ind A$ the subcategory of $\mod A$ having as objects a full set of representatives of the isomorphism classes of the indecomposable objects in $\mod A$. 

We denote by $\Gamma(A)$ the Auslander-Reiten quiver of $A$ and by $\tau$ the Auslander-Reiten translation in $\mod A$. 
For every module $M$ we denote by $|M|$ the number of isomorphism classes of indecomposable direct summands of $M$. 
Following \cite{AIR} we say that an $A$-module $M$ is $\tau$-rigid if $\Hom_A(M, \tau M)=0$. 
Moreover, we say that a $\tau$-rigid module $M$ is $\tau$-tilting if $|M|=|A|$, where $A$ is considered as a module over itself.

Let $M$ be an $A$-module. 
We denote by $\Ann M$ the annihilator ideal of $M$.
The full subcategory of $\mod A$ having as objects the direct sums of indecomposable summands of $M$ is denoted by $\add M$.
By $\Gen M$ (respectively $\cogen M$) we denote the full subcategory having as objects those modules $X$ such that there is an epimorphism $M_0 \rightarrow X $ (a monomorphism $X \rightarrow M_0$,
respectively), with $M_0\in \add M$. 

We denote the projective (or injective) dimension of a module $M$ by $\pd_A M$ (or $\id_A M$, respectively). 
Also, we denote by $\gldim A$ the global dimension of $A$.

Let $\mathcal{X}$ be an additive subcategory of $\mod A$. 
We denote by $\mathcal{X}^{\perp}$ the category of all modules $M$ such that $\Hom_A(-,M)\mid_{\mathcal{X}} = 0$. Dually, we denote by $^{\perp}\mathcal{X}$ the category of all modules $M$ such that $\Hom_A(M,-)\mid_{\mathcal{X}} = 0$.
A module $M$ in $\mathcal{X}$ is called Ext-projective in $\mathcal{X}$  if $\Ext^1_A(M,-)\mid_{\mathcal{X}} = 0$. Dually,
a module $N$ is called  Ext-injective in $\mathcal{X}$ if $\Ext^1_A(-, N)\mid_{\mathcal{X}} = 0$.

\subsection{Left and right part of the module category.}

Given $M, N\in \ind A$, a path from $M$ to $N$ in $\ind A$ is a sequence of nonzero morphisms $M = X_1 \rightarrow X_2 \rightarrow \cdots \rightarrow X_t = N $ $(t \geq 1)$  where $X_i \in \ind A$ for all $i$. In this case, we say that $M$ is a predecessor of $N$ and that $N$ is a successor of $M$.

We recall from \cite{HRS} that the right part $\R_A$ of $\mod A$ is the full subcategory of $\ind A$ defined by
$$\R_A = \{M \in \ind A \, | \, \id_A N \leq 1 \text{ for each successor }  N \text { of } M \}$$
Clearly, $\R_A$ is closed under successors. Dually, the left part,
$$\L_A= \{M \in \ind A \, | \, \pd_A N \leq 1 \text{ for each predecessor }  N \text { of } M \}$$
is a full subcategory of $\ind A$ closed under predecessors. 

\begin{defi}\label{def:laura}
    An algebra $A$ is called
\textit{laura} whenever $\L_A\cup \R_A$  is cofinite in $\ind A$, and it is a \textit{strict laura algebra} if it is laura but not quasi-tilted.
\end{defi}

Let $A$ be a strict laura algebra.
If $E$ is the direct sum of a complete set of representatives of the isomorphism classes of indecomposable Ext-injectives in the subcategory $\add \L_A$, then $\add \L_A$ coincides with the class $\cogen E$.
Dually, $\add \R_A$ coincides with the class $\Gen F$, where $F$ is the direct sum of a complete set of representatives of the isomorphism classes of indecomposable Ext-projectives in the subcategory $\add \R_A$. 
See \cite{Assem1}\\

\subsection{Sections and slices}

The definition of algebras determined by $\tau$-slices is based on the notion of $\tau$-slice introduced in \cite{Treffinger}. 
In this subsection, we recall several notions of slice modules and sections in Auslander-Reiten quivers that are instrumental to the rest of the paper. 
We start by recalling the notion of complete slices introduced in \cite{HappelRingel}.

\begin{defi}\cite[Definition 4.2.2]{HappelRingel}
A module $T$ is said to be a \textit{complete slice} if the following are satisfied.
\begin{enumerate}
    \item The isomorhism classes of indecomposable direct summands of $T$ form a full and connected subquiver $\Sigma_T$ in a connected component $\Gamma$ of $\Gamma(A)$.
    \item The module $T$ is sincere.
    \item The module $T$ is convex in $\mod A$.
    \item If $M$ is a  non-projective, indecomposable $A$-module, then at most one of M or $\tau M$ belongs to $\Sigma_T$.
    \item If $M$ and $S$ are indecomposable such that $S$ belongs to $\Sigma_T$ and there is an irreducible morphism $f : M \to S$, then either $M$ belongs to $\Sigma_T$ or $M$ is not injective and $\tau^{-1} M$ belongs to $\Sigma_T$.    
\end{enumerate}
\end{defi} 

\begin{teo}\cite[Theorem 7.2]{HappelRingel}
    An algebra $A$ is tilted if and only if there is a complete slice $T$ in $\mod A$. In this case $T$ is tilting and $\End_A(T)$ is hereditary.
\end{teo}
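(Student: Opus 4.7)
The plan is to prove the biconditional in two directions, with the additional statements ($T$ tilting and $\End_A(T)$ hereditary) falling out of the backwards direction.

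For the direction $(\Rightarrow)$, assume $A\cong\End_H(U)^{op}$ with $H$ hereditary and $U$ a tilting $H$-module. My plan is to exhibit a complete slice explicitly via the Brenner-Butler theorem, which yields a torsion pair $(\mathcal{T}(U),\mathcal{F}(U))$ in $\mod H$, a splitting torsion pair $(\mathcal{X}(U),\mathcal{Y}(U))$ in $\mod A$, and an equivalence $\Hom_H(U,-)\colon \mathcal{T}(U)\to\mathcal{Y}(U)$. Since $D(H)$ lies in $\mathcal{T}(U)$, set $T=\Hom_H(U,D(H))$ and verify the five axioms one by one: sincerity because $D(H)$ is an injective cogenerator of $\mod H$; connectedness and convexity from the fullness and exactness of the equivalence; and axioms (4) and (5) from the fact that the Auslander-Reiten translation of $\mod H$ corresponds to that of $\mod A$ under the tilting functor restricted to $\mathcal{T}(U)$.

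For the direction $(\Leftarrow)$, suppose $\Sigma_T$ is a complete slice in a component $\Gamma$ of $\Gamma(A)$. I would first establish that $T$ is a tilting $A$-module. The vanishing $\Ext^1_A(T,T)=0$ is obtained from the Auslander-Reiten formula together with the incompatibility of axioms (3) and (4): a nonzero extension between two summands $T_i,T_j$ would yield a nonzero stable morphism whose existence forces a path of irreducible maps between a summand and $\tau$ of a summand, forbidden by placing such a module outside $\Sigma_T$ while convexity rules out passage through non-slice modules. The bound $\pd_A T\le 1$ follows from axiom (5), since any second syzygy of a summand of $T$ would provide a predecessor violating the mesh structure around $\Sigma_T$. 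Finally $|T|=|A|$ follows from sincerity, convexity, and a standard additivity argument on Auslander-Reiten sequences within $\Gamma$. Next I would show that $B:=\End_A(T)$ is hereditary: by axioms (1) and (5) the ordinary quiver of $B$ is isomorphic to $\Sigma_T^{op}$, and any relation among the generators would correspond to a mesh relation in $\Gamma$ whose intermediate factors lie outside $\Sigma_T$ and hence does not lift to a relation in $B$; therefore $B$ is a path algebra. Once both facts are established, the double-endomorphism part of the Brenner-Butler theorem applied to $T$ yields $A\cong\End_B(T)^{op}$, realising $A$ as tilted from the hereditary algebra $B$.

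The main obstacle is the hereditariness of $\End_A(T)$. Verifying that $T$ is tilting is a fairly direct application of the slice axioms to the Auslander-Reiten formula and to the shape of projective resolutions near $\Sigma_T$, and once this is done the conclusion that $A$ is tilted is a formal consequence of tilting theory. However, ruling out relations in $\End_A(T)$ requires delicate local control of the mesh structure at the boundary of $\Sigma_T$: one must argue that every mesh relation in $\Gamma$ involving summands of $T$ factors through modules outside $\Sigma_T$ and therefore is not a relation among the generators of $B$. This is precisely the role played by axiom (5), which dictates how $\Sigma_T$ meets its neighbouring $\tau$-orbits and guarantees that the quiver presentation of $B$ has no nontrivial relations.
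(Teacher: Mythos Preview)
The paper does not give its own proof of this statement: it is quoted verbatim as a background result with the citation \cite[Theorem~7.2]{HappelRingel} and no argument whatsoever is supplied. So there is nothing in the paper to compare your proposal against; any correct proof you write will necessarily be ``different'' in the trivial sense that the authors chose to cite rather than reprove.

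That said, your outline is broadly the classical one and is on the right track, but a couple of steps are sketchier than you acknowledge. For $\pd_A T\le 1$, the argument via ``a second syzygy would provide a predecessor violating the mesh structure'' is not how this is usually done and it is not clear it works as stated: syzygies need not lie in the same component as $T$, so mesh arguments in $\Gamma$ do not directly constrain them. The standard route is to observe that $\Hom_A(DA,\tau T)=0$ (every indecomposable injective, being sincere-image under some summand of $T$, lies on the successor side of $\Sigma_T$, and axiom~(4) forbids maps across to $\tau\Sigma_T$), and then invoke the well-known fact that $\Hom_A(DA,\tau M)=0$ implies $\pd_A M\le 1$. Likewise, your plan for hereditariness of $B=\End_A(T)$---ruling out relations by hand---can be made to work but is delicate; the cleaner argument is to show that the torsion pair $(\Gen T,\mathcal{F})$ in $\mod A$ induced by $T$ is \emph{split} (every indecomposable lies in $\Gen T$ or in $\mathcal{F}$, which follows from convexity and axiom~(5)), and then use the general tilting-theoretic fact that a tilting module with split torsion pair has hereditary endomorphism ring. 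This replaces your combinatorial boundary analysis with a single structural lemma.
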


It is well-known that connected components of the Auslander-Reiten quiver of an algebra are deeply related with translation quivers.
In translation quivers one can speak about certain combinatorial objects known as sections and presections. 
The combinatorial properties of Auslander-Reiten quivers happen to determine algebraic nature of the objects under study. 
The first appeareance of sections in representation theory of finite-dimensional algebras came with the so-called Liu-Skowronsky criterion which is the following. 

\begin{defi}
    Let $(\Gamma, \tau)$ be a connected translation quiver. Then a finite, convex, acyclic subquiver $\Sigma$ of $\Gamma$ is said to be a \textit{complete section} if for every $x\in \Gamma$ there is a unique $n\in \mathbb{Z}$ such that $\tau^{n}x\in \Sigma$.
\end{defi}

\begin{teo}\cite{Liu, Skowronski}
    Let $A$ be an algebra. Then $A$ is tilted if and only if there is a tilting module $T$ in $\mod A$ such that the isomorphism classes of indecomposable direct summands of $T$ form a complete section in a connected component $\Gamma$ of the Auslander-Reiten quiver $\Gamma(A)$ of $A$.
\end{teo}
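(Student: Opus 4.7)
The plan is to reduce both implications to the Happel--Ringel theorem stated just before, which characterises tilted algebras via complete slices. Since any complete slice is a tilting module whose indecomposable summands form a finite connected subquiver of $\Gamma(A)$, the strategy is to show that a tilting module whose summands form a complete section is precisely a complete slice, and vice versa.

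For the forward direction, I would assume $A$ is tilted. By the previous theorem there is a complete slice $T$, which is tilting. It only remains to verify that $\Sigma_T$ is a complete section of the component $\Gamma$ containing it. Finiteness and convexity are immediate from the complete slice definition; acyclicity follows because condition (4) of the slice definition forbids $M$ and $\tau M$ from both lying in $\Sigma_T$, which together with convexity and finiteness rules out oriented cycles. The covering condition on $\tau$-orbits is obtained by iterating condition (5) along arrows in $\Gamma$: any indecomposable $x\in\Gamma$ is connected to $\Sigma_T$ by a finite sequence of irreducible morphisms, and condition (5) forces some shift $\tau^n x$ to land in $\Sigma_T$, with uniqueness of $n$ coming from (4).

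For the backward direction, I would assume there is a tilting module $T$ whose summands form a complete section $\Sigma$ in a component $\Gamma$ of $\Gamma(A)$, and verify the five conditions of a complete slice for $T$, so that the Happel--Ringel theorem applies. Sincerity is automatic because a tilting module is always sincere; convexity is part of the section hypothesis; the full and connected subquiver requirement reduces to showing $\Sigma$ is connected, which follows from connectedness of $A$ together with the covering property of a section. Condition (4), that at most one of $M$ or $\tau M$ lies in $\Sigma$, is a direct consequence of the uniqueness in the covering condition.

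The main obstacle is condition (5): given an irreducible morphism $f\colon M\to S$ with $S\in\Sigma$ and $M$ indecomposable, one must show that either $M\in\Sigma$, or $M$ is not injective and $\tau^{-1}M\in\Sigma$. This is a combinatorial statement about sections in translation quivers. I would prove it by analysing the mesh ending at $S$ together with the mesh starting at $\tau^{-1}M$ (when the latter exists), then applying the covering property to $M$ to locate the unique $n$ with $\tau^n M\in\Sigma$; convexity of $\Sigma$ rules out $n\geq 2$, while the injectivity condition together with acyclicity rules out $n\leq -1$, leaving exactly the two possibilities in (5). Once (5) is established $T$ is a complete slice and the Happel--Ringel theorem yields that $A$ is tilted.
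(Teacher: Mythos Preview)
The paper does not prove this theorem. It appears in the preliminaries (Subsection~2.3) as a cited background result from \cite{Liu, Skowronski}, stated without argument. There is therefore no proof in the paper to compare your proposal against.

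Regarding the proposal on its own merits: the strategy of matching complete sections with complete slices and then invoking the Happel--Ringel theorem is the natural one, but there is a real gap in your backward direction. You write that convexity of $T$ (condition~(3) of a complete slice) ``is part of the section hypothesis''. It is not. The definition of a complete section in the paper requires $\Sigma$ to be convex \emph{as a subquiver of the translation quiver $\Gamma$}, that is, convex with respect to paths of irreducible morphisms inside the component. Condition~(3) of a complete slice, however, asks that $T$ be convex \emph{in $\mod A$}, that is, with respect to arbitrary paths of nonzero morphisms between indecomposables. The latter is strictly stronger, and passing from one to the other is exactly where the substance of the Liu--Skowro\'nski argument lies: one must show that no indecomposable outside $\Gamma$ (or outside $\Sigma$ within $\Gamma$ via non-irreducible maps) can sit on a path between two summands of $T$. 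This uses the tilting hypothesis in an essential way and is not a formality. Your sketch for the forward direction (establishing the $\tau$-orbit covering property from condition~(5) of a slice) is also rather loose, but that direction is the easier one; the convexity issue in the backward direction is the genuine obstruction.
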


Later on, the notions of right and left section were introduced in \cite{Assem1} generalizing the previously known complete sections. 
We only give the definition of right sections, being dual the definition of a left section. 

\begin{defi}\cite[§ 2.1]{Assem1}
A full subquiver $\Sigma$ of a translation quiver $\Gamma$ is called a \textit{right section} if:
\begin{enumerate}
    \item $\Sigma$  is acyclic.
    \item For any $x\in \Sigma_0$ such that there exist $y \in \Gamma_0$ and a path from $x$ to $y$, there exists a unique $n \geq 0$ such that $\tau^{n}y$.
    \item $\Sigma$ is convex in $\mod A$.
\end{enumerate}
\end{defi}

The importance of left and right sections is based on the deep connection they have with tilted algebras, as shown in the following theorem.

\begin{teo}\cite[Theorem B]{Assem1}
    Let $A$ be an algebra and let $T$ be an $A$-module such that the isomorphism classes of indecomposable direct summands of $T$ determine a right section in a connected component $\Gamma$ of the Auslander-Reiten quiver $\Gamma(A)$ of $A$. 
    Then $A_\rho := A/\Ann T$ is a tilted algebra that has $T$ as a complete slice. Moreover, the category of successors of $T$ in $\mod A$ and in $\mod A_\rho$ coincide.
\end{teo}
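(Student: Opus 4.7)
The plan is to verify that $T$, viewed as a module over the quotient algebra $A_\rho = A/\Ann T$, satisfies the five axioms of a complete slice in the sense of Happel--Ringel, and then invoke their theorem quoted above. Once $T$ is a complete slice over $A_\rho$, it is automatically a tilting module whose endomorphism algebra is hereditary, which yields the first assertion.

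First I would check sincerity: by construction $\Ann_{A_\rho} T = 0$, so $T$ is a faithful $A_\rho$-module, and over a finite-dimensional algebra faithful modules are sincere. Convexity (axiom (3) of a complete slice) is immediate from the corresponding clause in the definition of a right section. For axiom (4), if $M$ is indecomposable non-projective with both $M$ and $\tau M$ in $\Sigma$, then applying the uniqueness clause of the right section definition to $y = M$ (which is a successor of $\tau M$ via the almost split sequence) produces two distinct non-negative integers $n$ with $\tau^n y \in \Sigma$, a contradiction. Axiom (5), concerning irreducible morphisms ending in $\Sigma$, is a local check around each vertex of $\Sigma$: any irreducible $f\colon M\to S$ with $S\in \Sigma$ forces $M$ to sit in the predecessor stratum of $\Sigma$, and the uniqueness of the $n\geq 0$ such that $\tau^n M \in \Sigma$, combined with acyclicity of $\Sigma$, pins $M$ down in the required way.

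The count $|T|=$ rank of $K_0(A_\rho)$ (needed to conclude that $T$ is tilting) would be derived from the fact that every indecomposable $A_\rho$-module lying in the connected component containing $\Sigma$ admits a unique $\tau$-translate in $\Sigma$: this exhibits a bijection between $\tau$-orbits and indecomposable summands of $T$, and sincerity of $T$ over $A_\rho$ matches this count with the number of simple $A_\rho$-modules. For the second assertion, any indecomposable successor $N$ of $T$ in $\mod A$ fits in a chain $T_0\to X_1\to\cdots\to X_t=N$ with $T_0\in \add T$, and since each intermediate module receives a nonzero morphism from the faithful $A_\rho$-module $T$, it is annihilated by $\Ann T$ and therefore lies in $\mod A_\rho$; the reverse inclusion is trivial because $\mod A_\rho \subseteq \mod A$.

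The main obstacle is arguing that the component $\Gamma$ of $\Gamma(A)$ containing $\Sigma$ embeds as a component of $\Gamma(A_\rho)$ in a way compatible with irreducible morphisms and with the Auslander--Reiten translation. Concretely, one must show that $\tau_A$ and $\tau_{A_\rho}$ agree on the successor-closure of $\Sigma$, a point which is subtle because changing the ground ring can in principle alter the kernels in minimal projective presentations. The standard way around this is to exploit the faithfulness of $T$ over $A_\rho$ to match minimal projective presentations of successors of $T$ over the two algebras, so that the translates coincide on the relevant indecomposables; this is precisely what makes the second assertion of the theorem meaningful and is the technical heart of the argument.
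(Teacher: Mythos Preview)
The paper does not prove this theorem: it is quoted from \cite{Assem1} as a background result in the preliminaries, with no argument supplied, so there is nothing in the present paper to compare your sketch against.

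Evaluating your outline on its own terms, the strategy of verifying the Happel--Ringel slice axioms for $T$ over $A_\rho$ and then invoking their characterisation of tilted algebras is the natural one, and you correctly isolate the genuinely delicate point, namely the compatibility of $\tau_A$ and $\tau_{A_\rho}$ on successors of $\Sigma$. Two steps, however, do not go through as written. First, your argument for the second assertion is wrong: a nonzero morphism $T \to X$ only forces the \emph{image} of that map to be annihilated by $\Ann T$, not $X$ itself, so you cannot conclude that an arbitrary successor $X$ of $T$ lies in $\mod A_\rho$ from this alone. The real argument needs the full strength of the right-section axiom (every successor $y$ satisfies $\tau_A^{n} y \in \Sigma$ for a unique $n\ge 0$) together with the identification of almost split sequences over $A$ and $A_\rho$ that you flag but do not carry out. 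Second, your counting argument for $|T|$ equal to the rank of $K_0(A_\rho)$ is both unnecessary---once the five slice axioms hold, Happel--Ringel already gives that $T$ is tilting---and circular as phrased, since matching $\tau$-orbits in a single component with simple $A_\rho$-modules presupposes precisely the structural control over $\mod A_\rho$ that you are trying to establish. Finally, note that axiom (1) of a complete slice also demands that $\Sigma$ be connected, which is not part of the right-section definition and which you do not address.
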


With the arrival of $\tau$-tilting theory \cite{AIR}, the notions of slices and sections were studied in \cite{Treffinger}.
In order to define $\tau$-slices, we need to recall what a presection is in a translation quiver.

\begin{defi}
Let $\Gamma$ be a connected component of a translation quiver $(\Gamma, \tau)$.
A \textit{presection} $\Sigma$ in $\Gamma$ is an acyclic, full, connected subquiver of $\Gamma$ satisfying the following conditions.
\begin{enumerate}
    \item If $x \to y$ is an arrow in $\Gamma$ where $x$ is in $\Sigma$ then either $y$ or $\tau y$ is in $\Sigma$, but not both.
    \item If $x \to y$ is an arrow in $\Gamma$ where $y$ is in $\Sigma$ then either $x$ or $\tau^- x$ is in $\Sigma$, but not both.
\end{enumerate}
\end{defi}

\begin{defi}\cite[Definition~3.4]{Treffinger}
    Let $T$ be a $\tau$-rigid module. 
    We say that $T$ is a \textit{$\tau$-slice} in $\mod A$ if the full subquiver of $\Gamma(A)$ generated by the isomorphism classes of the indecomposable direct summands of $T$ forms a presection. A $\tau$-slice $T$ is said to be complete if $T$ is a sincere $A$-module.
\end{defi}

 There is also a deep connection between $\tau$-slices, tilted
algebras and complete slices, as shown in the following theorem.

\begin{teo}\cite[Theorem~3.14, Remark~3.15]{Treffinger}
    Let $T$ be a $\tau$-slice in $\mod A$ and let $\Ann T \subset A$.
    Then the algebra $A/\Ann T$ is a tilted algebra and $T$ is a complete slice in $\mod A/\Ann T$.
\end{teo}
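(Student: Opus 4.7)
The plan is to set $B := A/\Ann T$ and show that $T$ is a complete slice in $\mod B$ in the sense of Happel--Ringel; their theorem then identifies $B$ as tilted and delivers both conclusions at once. By construction of the annihilator, $T$ is a faithful $B$-module, hence sincere, which immediately establishes axiom~(2) of the complete-slice definition.

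First I would show that $T$ is $\tau$-rigid in $\mod B$. The $\tau$-rigidity in $\mod A$ gives $\Hom_A(T,\tau_A T) = 0$, but one has to relate $\tau_A$ and $\tau_B$ carefully: since $T$ is faithful over $B$, the minimal projective presentations of $T$ over $B$ should be inherited from those over $A$, which lets us identify $\tau_B T$ with $\tau_A T$ on the relevant part of the category. This inheritance should also show that the AR component of $\Gamma(A)$ containing the summands of $T$ determines a connected component of $\Gamma(B)$ in which $\Sigma_T$ still sits as a full, connected, acyclic subquiver, giving axiom~(1).

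Next I would verify axioms (3)--(5). Axioms (4) and (5) should translate almost directly from the defining conditions of a presection: the alternation between $y$ and $\tau y$ (respectively between $x$ and $\tau^{-} x$) that a presection demands is precisely what (4) and (5) require. Convexity (axiom~(3)) is the more delicate point; my approach is to take an arbitrary path $T_i = X_0 \to X_1 \to \cdots \to X_n = T_j$ between summands of $T$ in $\mod B$ and argue that every intermediate $X_k$ must lie in $\add T$, using $\Hom_B(T,\tau_B T) = 0$ together with a standard walking-along-the-AR-component argument guided by the presection.

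The hardest part will be the implicit completeness of the slice, namely showing that every indecomposable $B$-module in the component in question lies in the $\tau_B$-orbit of some summand of $T$. My plan here is to propagate outward from $\Sigma_T$ along arrows of $\Gamma(B)$: the two axioms of a presection guarantee that at each step either the endpoint or its $\tau$-shift already belongs to $\Sigma_T$, so iterating these alternations reaches every vertex of the component. Combined with sincerity this should leave no indecomposable uncovered, and the Happel--Ringel theorem cited above then yields that $B = A/\Ann T$ is tilted with $T$ as a complete slice.
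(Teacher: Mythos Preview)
The paper does not prove this theorem at all: it is quoted verbatim from \cite[Theorem~3.14, Remark~3.15]{Treffinger} as a preliminary result, with no argument given. So there is no proof in the present paper to compare your proposal against.

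As for your sketch on its own merits, the overall strategy of verifying the Happel--Ringel axioms for $T$ over $B = A/\Ann T$ is the natural one, but two points deserve attention. First, your passage from $\tau_A$-rigidity to $\tau_B$-rigidity via ``minimal projective presentations of $T$ over $B$ should be inherited from those over $A$'' is not a valid step in general; projective covers change under the quotient. The standard route (and the one used in \cite{Treffinger}) goes through the Auslander--Smal{\o} characterisation: $T$ is $\tau$-rigid over $A$ if and only if $T$ is a partial tilting module over $A/\Ann T$, i.e.\ $\pd_B T \le 1$ and $\Ext^1_B(T,T)=0$. This is what actually drives the verification of the slice axioms and, crucially, gives the tilting property needed to invoke the Happel--Ringel theorem. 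Second, your final paragraph is unnecessary: the Happel--Ringel definition of a complete slice reproduced in the paper does \emph{not} require that every vertex of the component lie in the $\tau$-orbit of $\Sigma_T$; axioms (1)--(5) together with sincerity are already sufficient for their theorem, so the ``hardest part'' you anticipate is not part of the job.
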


It follows by \cite[Proposition 3.21]{Treffinger} that the notions of $\tau$-slices and local slices coincide
in cluster tilted algebras. Moreover, \cite[Proposition 4.3]{Treffinger} shows that complete $\tau$-slices and complete slices coincide for tilted algebras.

\section{Algebras determined by $\tau$-slices}

In this section, we introduce the algebras determined by $\tau$-slices and we show that every strict laura algebra satisfies this property.

\begin{defi}\label{def:strictcycliclaura}
An algebra $A$ is said to be \textit{determined by $\tau$-slices} if the following conditions are satisfied.

\begin{enumerate}
	\item[$(1)$] There are two finite sets of $\tau$-slice modules $\{T_1, \cdots, T_s\}$ and $\{S_1, \cdots, S_t\}$ verifying that $\Hom_A(T_i, S_j)=0$ for all $1 \leq i \leq s$, $1\leq j\leq t$.
    \item[$(2)$] The full additive subcategories 
    $$ \mod A \setminus \left(\bigcup_{i=1}^s \Gen T_i \right) \cup \left(\bigcup_{j=1}^t \cogen S_j \right) \text{ and }\left(\bigcup_{i=1}^s T_i ^\perp\right) \cap \left(\bigcup_{j=1}^t {}^\perp S_j\right)$$ 
    coincide. Moreover, this category contains only finitely many isomorphism classes of indecomposable modules. 
    We denote this subcategory of $\mod A$ by $\Y_A$. 
	\item[$(3)$] For every object $X$ in $\cup_{i=1}^s\Gen T_i$ there is a $k \in \{1, \cdots, s\}$ such that every morphism $f : L \to X$ where $L$ does not have any injective direct summand in $\Gen T_k$ and $L \in \add (A \oplus DA \oplus \cup_{i\neq k} T_i)$ admits a factorisation
	$$L \xrightarrow{} T' \xrightarrow{f'} X $$
	where $T'$ is in $\add T_k$.
    \item[$(3^{op})$] For every object $Y$ in $\cup_{j=1}^t\cogen S_j$ there is a $k \in \{1, \cdots, t\}$ such that every morphism $g : Y \to N$ where $N$ does not have any projective direct summand in $\cogen S_k$ and $N \in \add (A \oplus DA \oplus \cup_{j\neq k} S_j)$ admits a factorisation
	$$Y \xrightarrow{} S' \xrightarrow{f'} N $$
	where $S'$ is in $\add S_k$.
\end{enumerate}
\end{defi}

\begin{obs}
    It follows from the definition of algebras determined by $\tau$-slices that $\cup_{i=1}^s\Gen T_i$ is closed under successors and $\cup_{j=1}^t\cogen S_j$ is closed under predecessors.
    Indeed, let $f: M\to N$ be a nonzero map between indecomposable modules $M$ and $N$ and assume that $M$ is in $\cup_{i=1}^s\Gen T_i$.
    By condition $(1)$, $N$ cannot belong to $\cup_{j=1}^t\cogen S_j$.
    Then $N$ is either in $\cup_{i=1}^s\Gen T_i$ or in $\Y_A$.
    But $f\in \Hom_A(M,N)$, so $N\not\in \cup_{i=1}^sT_i^\perp$ and therefore $N\not\in\Y_A$ by condition $(2)$. 
    Hence $N\in \cup_{i=1}^s\Gen T_i$.
    A similar argument shows that $\cup_{j=1}^t\cogen S_j$ is closed under predecessors.
\end{obs}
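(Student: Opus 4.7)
The plan is to exploit the trichotomy provided by condition~$(2)$ of Definition~\ref{def:strictcycliclaura}: every indecomposable $A$-module falls into exactly one of $\bigcup_{i=1}^s \Gen T_i$, $\bigcup_{j=1}^t \cogen S_j$, or $\Y_A$. A preliminary observation is that conditions~$(1)$ and~$(2)$ together force these three classes to be pairwise disjoint. Indeed, if $X \in \Gen T_i \cap \cogen S_j$, then composing an epimorphism $T_i^n \twoheadrightarrow X$ with a monomorphism $X \hookrightarrow S_j^m$ yields a nonzero map $T_i^n \to S_j^m$, contradicting $\Hom_A(T_i,S_j)=0$. So proving closure under successors reduces to a case analysis.

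Given a nonzero morphism $f\colon M \to N$ between indecomposables with $M \in \Gen T_k$ for some $k$, I would rule out the two other cells of the partition in turn. First, to exclude $N \in \cogen S_j$, fix an epimorphism $\pi\colon T_k^n \twoheadrightarrow M$; if $N$ admitted an embedding $N \hookrightarrow S_j^m$, then composing with $f \circ \pi$ would produce a nonzero map $T_k^n \to S_j^m$, again contradicting condition~$(1)$. Hence $N \notin \bigcup_{j}\cogen S_j$.

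Next, to exclude $N \in \Y_A$, I would use the Hom-orthogonality characterization of $\Y_A$ in condition~$(2)$. Since $\pi$ is surjective and $f$ is nonzero, the composite $f\circ\pi\colon T_k^n \to N$ is nonzero, so $\Hom_A(T_k,N)\neq 0$ and $N \notin T_k^{\perp}$. This is incompatible with $N \in \Y_A$, and we conclude $N \in \bigcup_i \Gen T_i$, establishing closure under successors. The closure of $\bigcup_j\cogen S_j$ under predecessors follows by mirroring the argument with the roles of $\Gen T_i$ and $\cogen S_j$ interchanged and using ${}^{\perp}S_j$ instead of $T_k^{\perp}$.

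The only delicate step, and the one I would verify first, is the reading of the characterization of $\Y_A$ in condition~$(2)$: the argument requires that any $N \in \Y_A$ actually lies in $T_k^{\perp}$ for the particular $k$ at hand, whereas the notation $\bigcup_{i} T_i^{\perp}$ is written as a union. This is plainly the intended interpretation, since otherwise the trichotomy could not separate $\Gen T_k$ from $\Y_A$; conditions~$(3)$ and~$(3^{op})$ are not needed for this remark and play no role in the argument.
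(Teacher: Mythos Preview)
Your proposal is correct and follows essentially the same route as the paper's own argument in the remark: use the trichotomy from condition~$(2)$, rule out $N\in\bigcup_j\cogen S_j$ via condition~$(1)$, then rule out $N\in\Y_A$ via the Hom-orthogonality description of $\Y_A$, leaving $N\in\bigcup_i\Gen T_i$. You are simply more explicit than the paper in constructing the nonzero composites $T_k^n\to S_j^m$ and $T_k^n\to N$, and you rightly flag the union-versus-intersection ambiguity in the description of $\Y_A$; the paper's one-line deduction ``$f\in\Hom_A(M,N)$, so $N\notin\bigcup_i T_i^\perp$'' glosses over exactly the point you raise.
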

 
The aim of this section is to show that our definition is a generalization of the notion of strict laura algebras. 
But before doing so, we recall some well-known properties of the module category over a laura algebra.
We state these results for the right part of the module category, but we note that the dual results also hold for the left part of the module category.

\begin{lema}\cite[Theorem B]{Assem1}\label{lemma: sigma es secci'on} Let $A$ be an algebra and $\mathcal{C}\subseteq \R_A$ be a full subcategory closed under successors having $\mathcal{F}$ as subcategory of Ext-projectives.
Let $\Gamma$ be a component of the Auslander-Reiten quiver $\Gamma(A)$ of $A$. 
If $\Sigma=\Gamma \cap \mathcal{F}\neq \emptyset$ then, $\Sigma$ is a right section of $\Gamma$ convex in $\ind A$. Moreover, $\Ann A/ \Sigma$ is a tilted algebra having $\Sigma$ as complete slice.
\end{lema}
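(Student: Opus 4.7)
The statement is phrased as a direct citation of \cite[Theorem B]{Assem1}, so my plan is to split it into a combinatorial part, namely that $\Sigma$ is a right section in $\Gamma$ convex in $\ind A$, and an algebraic part, namely that $A/\Ann \Sigma$ is tilted with $\Sigma$ as complete slice. The algebraic part is then immediate by applying the earlier quoted theorem on right sections to the module $T=\bigoplus_{M\in\Sigma}M$, once the combinatorial part is established.

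For the combinatorial part, I would verify the three axioms of a right section one at a time. Acyclicity of $\Sigma$ follows from the observation that $\mathcal{C}\subseteq \R_A$ is closed under successors, so any oriented cycle inside $\Sigma$ would be a cycle of nonzero morphisms between indecomposables lying in $\R_A$; combined with the Ext-projectivity of the vertices in $\mathcal{F}$, this contradicts the control on injective dimensions of successors that defines $\R_A$, via a standard Happel--Ringel type argument. For the $\tau$-orbit condition, let $x\in \Sigma_0$ and let $y\in\Gamma_0$ be reached from $x$ by a path in $\ind A$. Since $\mathcal{C}$ is closed under successors and $x\in\mathcal{C}$, we get $y\in\mathcal{C}$; as $\mathcal{F}$ provides enough Ext-projectives for $\mathcal{C}$, iterating $\tau$ on $y$ must eventually land in $\mathcal{F}$, producing some $n\geq 0$ with $\tau^n y\in \Sigma$ (the component $\Gamma$ being preserved by $\tau$). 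Uniqueness of such an $n$ follows from the acyclicity already proved, because two distinct points of the same $\tau$-orbit lying in $\Sigma$ would produce a cycle via sectional paths in $\Gamma$. Finally, convexity in $\ind A$ is obtained by a standard ``sandwich'' argument: any intermediate vertex on a path from one module of $\Sigma$ to another lies in $\mathcal{C}$ by closure under successors, and the Ext-projectivity of the endpoints forces the intermediate vertices to be Ext-projective in $\mathcal{C}$ as well, hence to lie in $\mathcal{F}\cap\Gamma=\Sigma$.

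The step I expect to be most delicate is the uniqueness of the translation index $n$, since this is where the precise interplay between the global structure of $\R_A$ and the local combinatorics of a single component $\Gamma$ must be exploited; in particular one has to rule out that the $\tau$-orbit of $y$ meets $\mathcal{F}$ more than once within $\Gamma$, which ultimately rests on the absence of cycles among Ext-projectives of $\mathcal{C}$. Once this is in place, the tiltedness of $A/\Ann \Sigma$ and the fact that $\Sigma$ is a complete slice in it follow directly from the previously cited theorem applied to $T=\bigoplus_{M\in \Sigma}M$.
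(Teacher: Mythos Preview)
The paper does not give its own proof of this lemma: it is stated as a direct citation of \cite[Theorem~B]{Assem1} and is used as a black box throughout. There is therefore nothing in the paper to compare your sketch against; within this paper the correct ``proof'' is simply the reference.

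That said, your sketch contains a genuine gap, most visibly in the uniqueness step. You argue that two distinct points $\tau^{m}y$ and $\tau^{n}y$ (with $m>n$) of the same $\tau$-orbit lying in $\Sigma$ ``would produce a cycle via sectional paths in $\Gamma$'', and then appeal to acyclicity. But sectional paths between $\tau^{m}y$ and $\tau^{n}y$ go only one way; you obtain a path, not a cycle, so acyclicity of $\Sigma$ does not rule this out. The argument actually used in \cite{Assem1} rests on the characterisation of Ext-projectives in a successor-closed subcategory: an indecomposable $M\in\mathcal{C}$ is Ext-projective in $\mathcal{C}$ if and only if $M$ is projective or $\tau M\notin\mathcal{C}$. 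With this in hand, if $\tau^{m}y,\tau^{n}y\in\Sigma\subseteq\mathcal{F}$ and $m>n$, then $\tau^{m}y\in\mathcal{C}$ and closure under successors forces $\tau^{n+1}y\in\mathcal{C}$, contradicting Ext-projectivity of $\tau^{n}y$. The same criterion is what drives both the existence of such an $n$ (walk back along the $\tau$-orbit until you leave $\mathcal{C}$; the last module still in $\mathcal{C}$ is Ext-projective) and the convexity argument, where your assertion that ``Ext-projectivity of the endpoints forces the intermediate vertices to be Ext-projective'' is not automatic and needs exactly this ingredient. Without making the $\tau M\notin\mathcal{C}$ characterisation explicit, none of these three steps goes through as written.
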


\begin{lema}\cite[Theorems 4.2 and 5.1 ]{ACT}\label{A_rho}
Let $A$ be a strict laura algebra, then each connected component of its right support $A_\rho$ is a tilted algebra, and the restriction to this component of $F$ is a slice module.
\end{lema}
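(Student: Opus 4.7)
The plan is to derive this lemma directly from Lemma~\ref{lemma: sigma es secci'on} and then unwind the combinatorics component by component.

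First, I would specialize Lemma~\ref{lemma: sigma es secci'on} to the right part of $\mod A$. Recall from the paragraph following Definition~\ref{def:laura} that $\add \R_A = \Gen F$, where $F$ is an additive generator of the Ext-projective objects of $\add \R_A$. In particular, $\add \R_A$ is closed under successors and its subcategory of Ext-projective objects is exactly $\add F$. Applying the lemma with $\mathcal C = \add \R_A$ and $\mathcal F = \add F$, for each connected component $\Gamma$ of $\Gamma(A)$ for which $\Sigma_\Gamma := \Gamma \cap \add F$ is non-empty, $\Sigma_\Gamma$ is a right section of $\Gamma$, convex in $\ind A$, and $A/\Ann \Sigma_\Gamma$ is a tilted algebra having $\Sigma_\Gamma$ as a complete slice.

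Second, I would identify the connected components of the right support $A_\rho$ with those components $\Gamma$ for which $\Sigma_\Gamma \neq \emptyset$. Since the indecomposable projectives of $A_\rho$ correspond bijectively to the indecomposable direct summands of $F$, two such summands lie in the same connected component of $A_\rho$ precisely when they are linked by a chain of nonzero morphisms in $\mod A$; because $\Gen F$ is closed under successors, such a chain can be taken inside a single connected component of $\Gamma(A)$. This produces the required bijection, and under it the connected component of $A_\rho$ associated with $\Gamma$ is exactly $A/\Ann \Sigma_\Gamma$, which by the first step is tilted, while the restriction of $F$ to that component is $\bigoplus_{X \in \Sigma_\Gamma} X$, namely the module whose indecomposable summands form $\Sigma_\Gamma$, hence a complete slice.

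The main obstacle is the bookkeeping of the second step: one has to reconcile the chosen definition of ``right support'' with the component structure of $\Gamma(A)$, and in particular verify that nonzero morphisms between summands of $F$ really do witness connectedness inside a single component of the Auslander-Reiten quiver (rather than only through a detour inside $\L_A$ or $\Y_A$). Once this translation between algebraic and combinatorial connectedness is established, the conclusion follows immediately from Lemma~\ref{lemma: sigma es secci'on}.
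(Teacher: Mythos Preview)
The paper does not prove this lemma at all; it is simply cited from \cite{ACT}. So there is no ``paper's own proof'' to compare against, and your proposal is an attempt to reconstruct the result from Lemma~\ref{lemma: sigma es secci'on}.

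Your first step is sound: specializing Lemma~\ref{lemma: sigma es secci'on} to $\mathcal{C}=\add\R_A$ does give, for each component $\Gamma$ meeting $F$, a tilted algebra $A/\Ann\Sigma_\Gamma$ with $\Sigma_\Gamma$ as complete slice. The second step, however, contains a genuine gap, and it is more than the ``bookkeeping'' you describe. You assert that two indecomposable summands of $F$ linked by a chain of nonzero morphisms must lie in a single component of $\Gamma(A)$ ``because $\Gen F$ is closed under successors''. This is a non sequitur: closure under successors is a statement about targets of morphisms, not about Auslander--Reiten components, and morphisms between indecomposables routinely cross components in representation-infinite algebras. What you actually need is that $\Hom_A(\Sigma_\Gamma,\Sigma_{\Gamma'})=0$ whenever $\Gamma\neq\Gamma'$, so that $A_\rho\cong\prod_\Gamma A/\Ann\Sigma_\Gamma$ as algebras. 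Establishing this orthogonality (or, equivalently, that the block decomposition of $A_\rho$ matches the decomposition of $F$ by Auslander--Reiten components) is exactly the substantive content provided by \cite[Theorems 4.2 and 5.1]{ACT}; it relies on the detailed structure of the connecting component of a strict laura algebra and is not a consequence of Lemma~\ref{lemma: sigma es secci'on} alone.
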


\begin{obs}\label{rmk:mismo tau}
Let $B$ be a quotient algebra of $A$, and $0 \rightarrow \tau_A X \rightarrow Y\rightarrow X \rightarrow 0$ be an almost-split sequence in $\mod A$, with both $X$ and $\tau_A X$  indecomposable $B$-modules. 
Then, since $\mod B$ is a full subcategory of $\mod A$, this sequence is also almost-split in $\mod B$. In particular, $\tau_BX=\tau_AX$.
\end{obs}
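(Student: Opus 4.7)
The plan is to flesh out the one-line justification sketched in the remark: since $\mod B$ is a full subcategory of $\mod A$ and the almost-split sequence in question lies inside $\mod B$, it must be almost-split there as well, whence $\tau_B X \cong \tau_A X$ by uniqueness. Writing $B = A/I$ for a two-sided ideal $I$, the category $\mod B$ is identified with the full subcategory of $\mod A$ consisting of modules annihilated by $I$. Fullness of this inclusion means that $\Hom_A$ and $\Hom_B$ agree on pairs of $B$-modules, and hence endomorphism rings, the notions of (non-)retraction and (non-)section, and direct summand decompositions all coincide in the two ambient categories.

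The crucial preliminary step is to argue that the middle term $Y$ actually belongs to $\mod B$. Since $X$ is annihilated by $I$, multiplication by any $a \in I$ sends $Y$ into the submodule $\tau_A X \subset Y$ and therefore induces an $A$-linear map $X \to \tau_A X$. The extension class of the given sequence in $\Ext_A^1(X, \tau_A X)$ lies in the image of the natural map $\Ext_B^1(X, \tau_A X) \to \Ext_A^1(X, \tau_A X)$, because both end terms are in $\mod B$ and the induced maps $X \to \tau_A X$ constitute the precise obstruction that must vanish for the class to lift. Consequently $Y$ is isomorphic, as an $A$-module, to an object of $\mod B$, so one may assume $Y \in \mod B$ outright.

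With all three terms in $\mod B$, the remaining axioms of an almost-split sequence transfer immediately. The sequence is non-split (inherited from $\mod A$); the end terms are indecomposable in $\mod B$ since their endomorphism rings are local in $\mod A$ and hence in $\mod B$ by fullness; and any non-retraction $h \colon Z \to X$ with $Z \in \mod B$ is a non-retraction in $\mod A$, so factors through $Y \to X$ by the almost-split property there, the factorisation $Z \to Y$ being automatically $B$-linear by fullness. Uniqueness, up to isomorphism, of the almost-split sequence terminating at $X$ in $\mod B$ then forces $\tau_B X \cong \tau_A X$. The only genuinely non-formal ingredient is the verification that $Y$ lies in $\mod B$; everything else is a routine consequence of the fullness of the embedding $\mod B \hookrightarrow \mod A$ and the uniqueness of almost-split sequences.
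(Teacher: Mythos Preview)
Your attempt to prove that the middle term $Y$ lies in $\mod B$ does not go through, and in fact this conclusion is false under the stated hypotheses alone. Take $A = k[x]/(x^2)$, $I = (x)$, and $B = A/I \cong k$. The unique almost-split sequence in $\mod A$ is $0 \to k \to A \to k \to 0$, with $k$ the simple $A$-module; both end terms are indecomposable $B$-modules, yet the middle term $A$ is not annihilated by $I$, so the sequence does not lie in $\mod B$ (which, being semisimple, has no almost-split sequences whatsoever, so $\tau_B k$ is not even defined).

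The concrete error is the claim that right multiplication by $a \in I$ on $Y$ induces an $A$-linear map $X \to \tau_A X$: for a right $A$-module the map $y \mapsto ya$ is only $k$-linear unless $a$ is central, so no $A$-module homomorphism is produced in this way. Your follow-up assertion that the extension class lifts along $\Ext_B^1(X,\tau_A X) \to \Ext_A^1(X,\tau_A X)$ is then an unsupported claim rather than an argument, and the example above shows it can fail. The paper itself offers no argument for $Y \in \mod B$; its one-line justification via fullness tacitly assumes the whole sequence already sits in $\mod B$. In the intended application (the proof of Proposition~\ref{prop: F are tau-slices}) this is guaranteed by the results of \cite{Assem1} on successors of a right section, but as a freestanding statement the remark really needs $Y \in \mod B$ as an explicit hypothesis. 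Once that is granted, the remainder of your argument is correct and coincides with the paper's reasoning.
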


\begin{prop}\label{prop: F are tau-slices}
Let $A$ be a strict laura algebra,  let $F$ be the direct sum of a complete set of representatives of the isomorphism classes of indecomposable Ext-projectives in the subcategory $\add \R_A$ and consider $\Gamma$  be a component of the Auslander-Reiten quiver of $A$. If  $\Sigma =\Gamma \cap F\neq \emptyset$, then $\Sigma$ is a $\tau$-slice in $\ind A$.
\end{prop}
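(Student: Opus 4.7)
The plan is to verify the two defining conditions of a $\tau$-slice for the module $F_\Sigma := \bigoplus_{X \in \Sigma} X$: first, that $F_\Sigma$ is $\tau$-rigid over $A$, and second, that its indecomposable summands form a presection in $\Gamma$. The starting point is Lemma~\ref{lemma: sigma es secci'on}, applied to the subcategory $\mathcal{C} = \add \R_A$ (which is closed under successors since $\R_A$ is) and to its subcategory of Ext-projectives, whose additive generator is $F$. This yields that $\Sigma$ is a right section of $\Gamma$, convex in $\ind A$, and that $B := A/\Ann F_\Sigma$ is a tilted algebra having $F_\Sigma$ as a complete slice.

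For the $\tau$-rigidity, the argument I would run is as follows. A complete slice over a tilted algebra is a classical tilting module, hence $\tau$-tilting by \cite{AIR}, so $\Hom_B(F_\Sigma, \tau_B F_\Sigma) = 0$. To lift this from $B$ to $A$, I would invoke Lemma~\ref{A_rho}: each connected component of the right support $A_\rho$ of the strict laura algebra $A$ is tilted, with the restriction of $F$ as slice module. From this I expect to deduce that, for every non-projective indecomposable summand $X$ of $F_\Sigma$, the almost split sequence in $\mod A$ ending at $X$ lies entirely in $\mod B$. Applying Remark~\ref{rmk:mismo tau} then gives $\tau_A X = \tau_B X$, and hence $\Hom_A(F_\Sigma, \tau_A F_\Sigma) = \Hom_B(F_\Sigma, \tau_B F_\Sigma) = 0$; the projective summands of $F_\Sigma$ contribute nothing.

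For the presection property, the features full, acyclic, connected and convex are immediate from $\Sigma$ being a right section together with the first Happel--Ringel axiom applied to the complete slice $F_\Sigma$ in $\mod B$. To check the two arrow conditions of a presection, I would reuse the coincidence of almost split sequences in $\mod A$ and $\mod B$ around $\Sigma$ established above: every arrow of $\Gamma$ incident to $\Sigma$ then corresponds to an irreducible morphism in $\mod B$, so that the fifth Happel--Ringel axiom for the complete slice $F_\Sigma$ in $\mod B$, together with its dual, produces exactly the presection arrow conditions, while the fourth Happel--Ringel axiom delivers the mutual exclusivity between $X$ and $\tau X$.

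The main obstacle I expect lies in the $B$-to-$A$ transfer of $\tau$-rigidity: showing that the almost split sequences in $\mod A$ ending at the indecomposable summands of $F_\Sigma$ do not leave $\mod B$ requires a careful use of the tilted structure of the components of the right support $A_\rho$, and this is the step where the strict laura hypothesis is most directly exploited. Everything else should be a fairly mechanical translation between the right section property, the complete slice axioms over $B$, and the definition of a $\tau$-slice.
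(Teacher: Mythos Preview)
Your proposal follows the same overall strategy as the paper: deduce $\tau$-rigidity from the slice structure over the tilted quotient, and verify the presection conditions by combining that slice structure with Remark~\ref{rmk:mismo tau}. Your treatment of $\tau$-rigidity is in fact more explicit than the paper's, which simply asserts that ``$\Sigma$ is a slice in $A_\rho$, therefore $\Sigma$ is $\tau$-rigid'' without spelling out the transfer.

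The one substantive difference is in the second presection condition (arrows $X\to Y$ with $Y\in\Sigma$). You propose to handle it, like the first condition, by transferring from $\mod B$: show that the almost split sequence in $\mod A$ ending at $Y$ stays in $\mod B$, then invoke the dual slice axiom there. This is exactly the step you flag as the main obstacle, and it is genuinely delicate because $\tau_A Y$ is a \emph{predecessor} of $\Sigma$ and need not obviously lie in $\mod B$ (closure under successors does not help here). The paper sidesteps this entirely by arguing directly in $\mod A$ using Lemma~\ref{lemma: sigma es secci'on}: since $\R_A$ is closed under successors, $\tau_A^{-1}X$ is a successor of $Y\in\Sigma$, so the right-section axiom yields a unique $n\ge 0$ with $\tau_A^{n}(\tau_A^{-1}X)\in\Sigma$; convexity of $\Sigma$ in $\ind A$ then forces $n=0$, i.e.\ $\tau_A^{-1}X\in\Sigma$. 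This is both shorter and avoids the transfer problem altogether. So your plan works, but the paper's handling of this one condition is cleaner and dissolves precisely the difficulty you anticipated.
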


\begin{proof}
Follows from Lemma \ref{A_rho} that $\Sigma$ is a slice in $A_{\rho}$, therefore $\Sigma$ is $\tau$-rigid. Now, consider a morphism $X \rightarrow Y$ with $X\in \Sigma$. If $Y \in \Sigma$ there is nothing to show. If instead $Y \notin \Sigma$ since $\Sigma $ is a slice in $A_{\rho}$ and the morphism $X \rightarrow Y$ is in fact a morphism in $A_{\rho}$ then, $\tau_AY=\tau_{A_{\rho}}Y \in \Sigma$ by Remark \ref{rmk:mismo tau}. Finally, consider a morphism $X \rightarrow Y$ with $Y\in \Sigma$. Since $\R_A$ is closed under successors $\tau_A^{-1}X $ belongs to $\R_A$. If $X \in \Sigma$ there is nothing to prove. Assume, $X\notin \Sigma$. The fact that, by Lemma \ref{lemma: sigma es secci'on},  $\Sigma$ is a right section in $\Gamma$ implies that there exits $n\geq 0$ such that $\tau_A^n(\tau_A^{-1}X)\in \Sigma$. If $n>0$, then we have a path $\tau_A^{n-1}X \rightsquigarrow X\rightarrow Y$ and the convexity of $\Sigma$ implies that  $X\in \Sigma$. In consequence $n=0$ and $\tau_A^{-1}X\in \Sigma$.

\end{proof}

\begin{teo}\label{strict laura is stric cyclic laura}
Every strict laura algebra is determined by $\tau$-slices.
\end{teo}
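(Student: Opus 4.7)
The plan is to manufacture the two families of $\tau$-slices from the Ext-injective/Ext-projective generators already attached to a strict laura algebra. Let $F$ (resp.\ $E$) be the generator of $\add\R_A$ (resp.\ cogenerator of $\add\L_A$) recalled just before Lemma~\ref{lemma: sigma es secci'on}, and decompose $F = T_1 \oplus \cdots \oplus T_s$, $E = S_1 \oplus \cdots \oplus S_t$ by grouping the indecomposable summands that lie in the same connected component of $\Gamma(A)$. Proposition~\ref{prop: F are tau-slices} (applied also to its dual) ensures that each $T_i$ and each $S_j$ is a $\tau$-slice, so these are the natural candidates to verify Definition~\ref{def:strictcycliclaura}.

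Conditions $(1)$ and $(2)$ should then follow from known facts about laura algebras. For $(1)$, a nonzero map $T_i \to S_j$ would force the hit summand of $S_j$ to become a successor of an object of $\R_A$, hence to lie in $\R_A$; for a strict laura algebra the Ext-injectives of $\add\L_A$ sit in the left support, disjoint from the right support containing $F$, so this is impossible. For $(2)$, from the identities $\bigcup_i \Gen T_i = \Gen F = \add\R_A$ and $\bigcup_j \cogen S_j = \cogen E = \add\L_A$ the set-theoretic complement in $\mod A$ is exactly the middle part $\ind A \setminus (\L_A \cup \R_A)$, which is finite by definition of a laura algebra. The identification of this complement with $\bigl(\bigcup_i T_i^\perp\bigr) \cap \bigl(\bigcup_j {}^\perp S_j\bigr)$ is then a direct check: any $M$ in the middle can neither receive a map from some $T_i \in \R_A$ nor send one to some $S_j \in \L_A$, on pain of falling into $\R_A$ or $\L_A$, and for $M\in\R_A$ or $M\in\L_A$ one uses condition $(1)$ together with $\add\L_A=\cogen E$ (resp.\ $\add\R_A=\Gen F$) to produce the required vanishing.

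Conditions $(3)$ and $(3^{op})$ are the substantive content. Given $X \in \Gen T_k$ and a map $f : L \to X$ with $L \in \add\bigl(A \oplus DA \oplus \bigcup_{i \neq k} T_i\bigr)$ and no injective summand of $L$ lying in $\Gen T_k$, the strategy is to invoke that on the $k$-th component $A/\Ann T_k$ is tilted with $T_k$ as a complete slice (Lemmas~\ref{lemma: sigma es secci'on} and \ref{A_rho}). The standard slice-theoretic picture then shows that a map into $\Gen T_k$ from a module outside the slice region of $T_k$ factors through $\add T_k$, provided the source carries no injective summand that would obstruct the lifting along an epimorphism $T_k^n \twoheadrightarrow X$. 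The hypotheses imposed on $L$ are precisely what rules out such obstructions. Condition $(3^{op})$ then follows by the formal dual argument.

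The main obstacle will be condition~$(3)$: turning the intuitive slice picture into a precise factorization requires combining the Ext-projectivity of $T_k$ inside $\R_A$, the generating property $X \in \Gen T_k$, and the hypothesis on the injective summands of $L$ to guarantee vanishing of $\Ext^1_A(L, \ker(T_k^n \to X))$ (or at least vanishing of the image of $\Hom_A(L,X)$ there) for a suitable epimorphism $T_k^n \twoheadrightarrow X$. Conditions $(1)$, $(2)$ and $(3^{op})$ are either structural consequences of the known separation of left and right supports or follow by the dual argument.
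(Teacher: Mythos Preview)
Your proposal is correct and follows the same approach as the paper: take the componentwise decompositions of $E$ and $F$ as the families of $\tau$-slices (via Proposition~\ref{prop: F are tau-slices} and its dual) and verify Definition~\ref{def:strictcycliclaura} from the known structure of strict laura algebras. For condition~$(3)$ the paper is more economical than your $\Ext^1$-lifting sketch: it simply invokes \cite[Theorem~B]{Assem1}, which gives that $F$ is a slice module in $\mod A_\rho$, so every morphism from $\ind A\setminus\R_A$ into $X\in\Gen F$ already factors through $\add F$, bypassing the bookkeeping you flag as the main obstacle.
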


\begin{proof}
Let $A$ be a strict laura algebra. 
We show that $A$ verifies the conditions of Definition~\ref{def:strictcycliclaura}.

First we verify condition $(1)$.
From \cite[Theorem A]{Assem1} and its dual, we know that there are modules $E$ and $F$ in $\mod A$ such that $\L_A = \cogen E$ and $\R_A = \Gen F$. 
Moreover it follows from Proposition \ref{prop: F are tau-slices} and its dual that $E$ and $F$ are isomorphic to the sum $E\cong \oplus_{i=1}^t S_i$ and $F\cong \oplus_{j=1}^s T_j$ where every $S_i$ and $T_j$ are $\tau$-slices.

Condition $(2)$ is a direct consequence of the definition of laura algebras, see Definition~\ref{def:laura}.

For condition $(3)$ we claim that  every module $X$ in $\Gen F$ and every morphism $f: L\rightarrow X$  where every indecomposable direct summand of $L$ does not belong to $\Gen F$ admits a factorization $L \rightarrow T' \rightarrow X$  where $T'$ is in $\add F$. 
It is enough to observe that from \cite[Theorem B]{Assem1}, $F$ is a slice module in $\mod A_{\rho}$, so that any morphism from a module in $\ind A \setminus \R_A$  to $X$  factors through $F$.
Finally condition $(3^{op})$ is shown by dual arguments.

\end{proof}

We now give an example of an algebra determined by $\tau$-slices which is not a strict laura algebra.

\begin{ejem}\label{strict cyclic laura algebra not strict laura}
Let $A$ be the  algebra  given by the following quiver:

\[\begin{tikzcd}
	2 && 5 \\
	& 3 \\
	1 && 4
	\arrow["{\alpha_3}"', shift right=2, from=1-1, to=1-3]
	\arrow["{\alpha_1}", shift left=2, from=1-1, to=1-3]
	\arrow["{\alpha_2}"{description}, from=1-1, to=1-3]
	\arrow["\theta", shift left =.5, from=1-3, to=2-2]
	\arrow["\gamma", shift left =.5, from=2-2, to=1-1]
	\arrow["\epsilon"', shift right =.5, from=2-2, to=3-1]
	\arrow["{\beta_1}", shift left =1 , from=3-1, to=3-3]
	\arrow["{\beta_2}"', shift right=1, from=3-1, to=3-3]
	\arrow["\delta"', shift right =.5, from=3-3, to=2-2]
\end{tikzcd}\]
bounded by the relations $\gamma \alpha_i=0$, $\alpha_i\theta=0$,  $\epsilon \beta_j=0$ and $\beta_j\delta =0$ (for $i=1,2,3$ and $j=1,2$.
The Auslander-Reiten quiver $\Gamma(A)$ has a component of the following form. The indecomposables are represented by their Loewy series. 

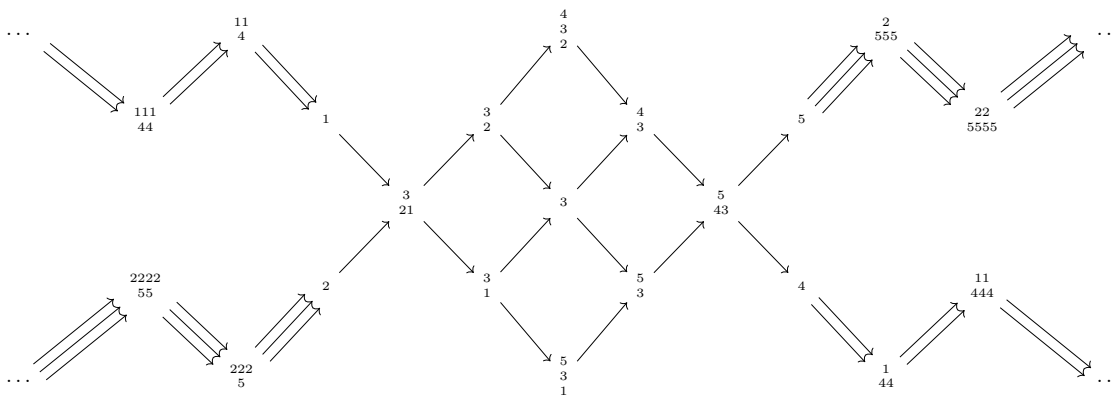
\begin{figure}[H]
\adjustbox{scale=.8,center}{
\begin{tikzcd}
	\ldots \hspace*{.5cm} && \rep{11\\4} &&&& \rep{4\\3\\2} &&&& \rep{2\\555} && \hspace*{.5cm} \ldots \\
	& \rep{111\\44} && \rep{1} && \rep{3\\2} && \rep{4\\3} && \rep{5} && \rep{22\\5555} \\
	&&&& \rep{3\\ 2 1 } && \rep{3} && \rep{5\\ 4 3} \\
	& \rep{2222 \\ 55} && \rep{2} && \rep{3\\1} && \rep{5\\3 } && \rep{4} && \rep{11\\ 444} \\
	\ldots  \hspace*{.5cm}  && \rep{222\\5} &&&& \rep{5\\3\\1} &&&& \rep{1\\ 44} && \hspace*{.5cm}  \ldots
	\arrow[from=1-1, to=2-2]
	\arrow[shift right=2, from=1-1, to=2-2]
	\arrow[shift left, from=1-3, to=2-4]
	\arrow[shift right, from=1-3, to=2-4]
	\arrow[from=1-7, to=2-8]
	\arrow[from=1-11, to=2-12]
	\arrow[shift right=2, from=1-11, to=2-12]
	\arrow[shift left=2, from=1-11, to=2-12]
	\arrow[shift left, from=2-2, to=1-3]
	\arrow[shift right, from=2-2, to=1-3]
	\arrow[from=2-4, to=3-5]
	\arrow[from=2-6, to=1-7]
	\arrow[from=2-6, to=3-7]
	\arrow[from=2-8, to=3-9]
	\arrow[from=2-10, to=1-11]
	\arrow[shift right=2, from=2-10, to=1-11]
	\arrow[shift left=2, from=2-10, to=1-11]
	\arrow[from=2-12, to=1-13]
	\arrow[shift right=2, from=2-12, to=1-13]
	\arrow[shift left=2, from=2-12, to=1-13]
	\arrow[from=3-5, to=2-6]
	\arrow[from=3-5, to=4-6]
	\arrow[from=3-7, to=2-8]
	\arrow[from=3-7, to=4-8]
	\arrow[from=3-9, to=2-10]
	\arrow[from=3-9, to=4-10]
	\arrow[shift left=2, from=4-2, to=5-3]
	\arrow[shift right=2, from=4-2, to=5-3]
	\arrow[from=4-2, to=5-3]
	\arrow[from=4-4, to=3-5]
	\arrow[from=4-6, to=3-7]
	\arrow[from=4-6, to=5-7]
	\arrow[from=4-8, to=3-9]
	\arrow[shift right, from=4-10, to=5-11]
	\arrow[shift left, from=4-10, to=5-11]
	\arrow[shift left, from=4-12, to=5-13]
	\arrow[shift right, from=4-12, to=5-13]
	\arrow[from=5-1, to=4-2]
	\arrow[shift right=2, from=5-1, to=4-2]
	\arrow[shift left=2, from=5-1, to=4-2]
	\arrow[shift right=2, from=5-3, to=4-4]
	\arrow[shift left=2, from=5-3, to=4-4]
	\arrow[from=5-3, to=4-4]
	\arrow[from=5-7, to=4-8]
	\arrow[shift right, from=5-11, to=4-12]
	\arrow[shift left, from=5-11, to=4-12]
\end{tikzcd}}
    \caption{A component of the Auslander-Reiten quiver of $A$}
    \label{fig:enter-label}
\end{figure}

An easy verification shows that 
$$T_1 =  \rep{11\\4} \oplus  \rep{1} \oplus \rep{3\\21} \oplus  \rep{2}\oplus  \rep{222\\5}, \quad
T_2 =  \rep{4\\3\\2} \oplus  \rep{3\\2} \oplus \rep{3\\21} \oplus  \rep{3\\1}\oplus  \rep{5\\3\\1}, \quad \text{ and } \quad
T_3 =  \rep{2\\555} \oplus  \rep{5} \oplus \rep{5\\43} \oplus  \rep{4}\oplus  \rep{1\\44}$$
are $\tau$-slices in $\mod A$.
Moreover, one can check that the sets $\{T_1, T_2, T_3\}$ and $\{\}$ are sets of $\tau$-slices that make $A$ an algebra determined by $\tau$-slices.

However, $A$ is not a laura algebra. 
Indeed, every indecomposable $A$-module is in a cycle containing one module of infinite projective and injective dimensions. 
Hence $\L_A$ and $\R_A$ contain no nonzero object. 
The claim follows from the fact that $A$ is of wild representation type.
\end{ejem}

\section{The representation dimension of algebras determined by $\tau$-slices}

Our next goal is to study the representation dimension of algebras determined by $\tau$-slices.
For this, we recall a series of preparatory results, most of which are well-known. 
See \cite[§1.2]{APT} and references therein.

\begin{lema}
Let $A$ be an algebra, $n$ be a positive integer, and $M$ be a generator-cogenerator of $\mod A$.
Then the global dimension of $\End_A (M)$ is at most $n$ if and only if for every object $X$ in $\mod A$ there is an exact sequence
$$
0 \xrightarrow{} M_{n-1} \xrightarrow{} \cdots \xrightarrow{} M_1 \xrightarrow{} M_0 \xrightarrow{f} X \xrightarrow{} 0
$$
where $M_i$ is in $\add M$ for every $0\leq i \leq n-1$ and the induced sequence
$$
0 \xrightarrow{} \Hom_A (Y, M_{n-1}) \xrightarrow{} \cdots \xrightarrow{} \Hom_A (Y, M_{0}) \xrightarrow{f^*} \Hom_A (Y, X) \xrightarrow{} 0
$$
is exact for every $Y$ in $\mod A$.
In particular, $\repdim A \leq n+1$.
\end{lema}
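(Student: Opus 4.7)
Set $B=\End_A(M)$ and let $F=\Hom_A(M,-)\colon\mod A\to\mod B$. The backbone of the argument is the classical fact that $F$ restricts to an equivalence between $\add M$ and the full subcategory of finitely generated projective $B$-modules, so that $\Hom_A(M,-)$-exact $\add M$-resolutions of objects of $\mod A$ correspond bijectively to projective resolutions in $\mod B$. This dictionary is what drives both implications.

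For ``$\gldim B\le n$ implies the existence of the resolution'', my plan is as follows. Since $M$ is a generator, every $X\in\mod A$ admits a surjective right $\add M$-approximation $M_0\to X$, and iterating this construction on the successive syzygies produces an $\add M$-resolution $\cdots\to M_1\to M_0\to X\to 0$ that, after applying $F$, becomes a projective resolution of $\Hom_A(M,X)$ in $\mod B$. The hypothesis on $\gldim B$ forces a sufficiently high syzygy of this $B$-resolution to already be projective, and via the equivalence the corresponding syzygy of $X$ must lie in $\add M$; truncating at this step produces the desired finite sequence $0\to M_{n-1}\to\cdots\to M_0\to X\to 0$. The additional property that the sequence remains exact after $\Hom_A(Y,-)$ for every $Y$ would be derived from the fact that each step was chosen to be a right $\add M$-approximation, combined with the cogenerator hypothesis on $M$.

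For the converse, assume that every $X\in\mod A$ admits such a sequence. Specializing $Y=M$ in the hypothesis yields a projective resolution $0\to\Hom_A(M,M_{n-1})\to\cdots\to\Hom_A(M,M_0)\to\Hom_A(M,X)\to 0$ in $\mod B$, which bounds $\pd_B\Hom_A(M,X)$. To promote this to a bound on $\gldim B$, I would take an arbitrary $N\in\mod B$ with a projective presentation $\Hom_A(M,Q_1)\to\Hom_A(M,Q_0)\to N\to 0$ obtained by lifting to $\add M$ via the equivalence; the left exactness of $F$ then identifies the second syzygy $\Omega^2_B N$ with $\Hom_A(M,\ker(Q_1\to Q_0))$, so the bound on projective dimensions of objects of the form $\Hom_A(M,Z)$ controls $\pd_B N$ and hence $\gldim B$.

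The principal subtlety, and the step I expect to be the main obstacle, is the passage between $\Hom_A(M,-)$-exactness and $\Hom_A(Y,-)$-exactness for an arbitrary $Y\in\mod A$, since the latter is a priori strictly stronger. This is precisely where the cogenerator hypothesis on $M$ becomes indispensable: because $DA\in\add M$, any $Y$ can be approximated by a module in $\add M$, and the general exactness statement can be reduced to the case $Y=M$ using this approximation together with the $\add M$-approximation properties built into the construction. The ``in particular'' assertion $\repdim A\le n+1$ is then immediate from the definition of representation dimension applied to the generator-cogenerator $M$.
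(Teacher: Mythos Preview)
The paper does not prove this lemma; it is stated as a known preparatory fact with a pointer to \cite[§1.2]{APT}. Your outline is the classical Auslander argument (the equivalence $\add M\simeq\operatorname{proj}\End_A(M)$, the $\Omega^2$ trick for arbitrary $B$-modules, truncation of an $\add M$-resolution using the bound on global dimension), and it is exactly the argument one finds in the cited literature, so at the level of strategy there is nothing to contrast.

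You have, however, correctly put your finger on a real difficulty and then proposed a repair that cannot succeed. The lemma as printed demands $\Hom_A(Y,-)$-exactness for \emph{every} $Y\in\mod A$. Take $Y=X$: surjectivity of $\Hom_A(X,M_0)\to\Hom_A(X,X)$ forces $\mathrm{id}_X$ to factor through $M_0$, hence $X\in\add M$. So the literal statement is false whenever $\add M\neq\mod A$, and no amount of approximating $Y$ by objects of $\add M$ via the cogenerator hypothesis will manufacture that surjectivity. The standard (and intended) formulation, which is what appears in \cite{APT} and in Auslander's original notes, asks only for exactness when $Y\in\add M$, equivalently for $Y=M$; under that reading your argument goes through cleanly and the ``principal subtlety'' you flag simply evaporates, since $\Hom_A(M,-)$-exactness is precisely what a right $\add M$-approximation guarantees. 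In short: your proof is correct for the lemma as it is normally stated, and the obstacle you isolate is an imprecision in the quoted formulation rather than a gap in your reasoning.
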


A generator-cogenerator $M$ in $\mod A$ satisfying the conditions of the above lemma is known as an Auslander generator for $\mod A$.

\begin{obs}\label{rmk:sec}
In this paper, we are particularly interested in the case $n=2$ in the previous lemma.
Under this extra hypothesis, to conclude that $\repdim A \leq 3$ it is enough to find for every $X$ in $\mod A$ a short exact sequence
$$0 \xrightarrow{} M_1 \xrightarrow{} M_0 \xrightarrow{f_X} X \xrightarrow{} 0$$
where $M_0$ and $M_1$ are in $\add M$ and $f_X : M_0 \to X$ is a right $\add M$-approximation.
It is worth noting that every approximation $f_X : M_0 \to X$ is an epimorphism since $M$ is a generator of $\mod A$.
Hence if $M \in \mod A$ is a generator-cogenerator of $\mod A$ such that for every $X$ in $\mod A$ there is a right $\add M$-approximation $f_X : M_0 \to X$ where $\ker f_X \in \add M$,  then $\repdim A \leq 3$.
\end{obs}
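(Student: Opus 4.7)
The plan is to exhibit an Auslander generator for $\mod A$ and invoke Remark~\ref{rmk:sec}. Since passing to $A^{op}$ interchanges the two families of $\tau$-slices together with the two disjuncts of the hypothesis, we may assume without loss of generality that $\bigcup_{i=1}^s \Gen T_i\subseteq \R_A$. We then set
$$M \;=\; A\oplus DA\oplus \bigoplus_{i=1}^s T_i\oplus \bigoplus_{j=1}^t S_j\oplus Y,$$
where $Y$ is an additive generator of $\Y_A$, which exists because $\Y_A$ has only finitely many isoclasses of indecomposables by condition~(2). The module $M$ is clearly a generator-cogenerator, so by Remark~\ref{rmk:sec} it suffices to produce, for each indecomposable $X\in\mod A$, a short exact sequence $0\to M_1\to M_0\to X\to 0$ with $M_0,M_1\in\add M$ whose first map is a right $\add M$-approximation. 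By condition~(2) every such $X$ lies in one of $\Y_A$, $\bigcup_i\Gen T_i$, or $\bigcup_j\cogen S_j$, and these three regions will be treated in turn.

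When $X\in\Y_A$ we have $X\in\add M$ and the identity does the job. The heart of the argument is the case $X\in\Gen T_k$, where $k$ is the index furnished by condition~(3). Since $T_k$ is a $\tau$-slice, $B_k:=A/\Ann T_k$ is tilted with $T_k$ as a complete slice; hence $T_k$ is a tilting $B_k$-module of projective dimension at most one, and since $X$ is a quotient of some $T_k^n$ it is a $B_k$-module, so the tilting theorem in $\mod B_k$ provides an exact sequence
$$0\to T_k^{(1)}\to T_k^{(0)}\to X\to 0$$
with both $T_k^{(i)}\in\add T_k\subseteq\add M$. It remains to verify that $T_k^{(0)}\to X$ is a right $\add M$-approximation in $\mod A$. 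Condition~(3) immediately handles summands of $A\oplus DA\oplus\bigoplus_{i\neq k}T_i$ that are not injective in $\Gen T_k$. An injective summand $I$ of $DA$ that lies in $\Gen T_k$ satisfies $\Ext^1_A(-,I)=0$, hence is Ext-injective in the torsion class $\Gen T_k$; as the Ext-injective generator of this class is $T_k$, we obtain $I\in\add T_k$, and the factorization follows from the $\add T_k$-approximation property in $\mod B_k$. For a summand $L$ of some $S_j$ or of $Y$, the hypothesis $\Gen T_k\subseteq\R_A$ combined with $\Hom(T_i,S_j)=0$ and the closure of $\bigcup_i\Gen T_i$ under successors forces the map $L\to X$ to factor through $L/(\Ann T_k)L$, which lies in $\Gen T_k$ as a $B_k$-module, and hence through $T_k^{(0)}$.

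The remaining case $X\in\cogen S_k$ is treated dually: starting from the projective cover $p\colon P\to X$ with $P\in\add A$, one augments it via condition~$(3^{op})$ by the summands needed to cover morphisms from $\add M$-objects, and uses the tilted-algebra structure of $A/\Ann S_k$ (where $S_k$ is a cotilting complete slice of injective dimension at most one) to see that the enlarged kernel still lies in $\add M$. The main obstacle I anticipate is in the second case: condition~(3) only bounds morphisms whose source is in $\add(A\oplus DA\oplus\bigoplus_{i\neq k}T_i)$, so the factorization of maps from the $S_j$'s and from $Y$ into $X$ must be extracted from the global hypothesis $\bigcup_i\Gen T_i\subseteq \R_A$ rather than from the axioms of algebras determined by $\tau$-slices alone; this is precisely where the extra assumption of the theorem earns its keep.
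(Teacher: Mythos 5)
Your text does not prove the statement at hand. The statement is Remark~\ref{rmk:sec} itself: a general criterion asserting that if a generator--cogenerator $M$ of $\mod A$ admits, for every $X\in\mod A$, a right $\add M$-approximation $f_X\colon M_0\to X$ with $\ker f_X\in\add M$, then $\repdim A\le 3$. What you have written is instead a proof of Theorem~\ref{teo principal}: you construct a specific module $M$ for an algebra determined by $\tau$-slices and verify the criterion case by case, and in your very first sentence you \emph{invoke} Remark~\ref{rmk:sec} as a known fact. So the claim you were asked to justify is assumed, and a different (later) result is proved in its place; no argument for the Remark itself appears anywhere in your text. That is the gap.

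The Remark is a direct specialization of the preceding lemma to $n=2$, and a proof needs only to say the following. Since $M$ is a generator, the projective cover of $X$ factors through any right $\add M$-approximation $f_X\colon M_0\to X$, so $f_X$ is an epimorphism and $0\to\ker f_X\to M_0\xrightarrow{f_X} X\to 0$ is exact, with $M_0$ and $\ker f_X$ in $\add M$ by hypothesis. Applying $\Hom_A(Y,-)$ for $Y\in\add M$ (which is what controls $\gldim\End_A(M)$) preserves left exactness automatically, and surjectivity of $\Hom_A(Y,M_0)\to\Hom_A(Y,X)$ is exactly the approximation property of $f_X$; hence the lemma applies with $n=2$ and yields $\repdim A\le 3$. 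As a side remark: read as a proof of Theorem~\ref{teo principal}, your argument follows broadly the same strategy as the paper's, except that in the case $X\in\Gen T_k$ the paper starts from an arbitrary right $\add M$-approximation, replaces the summands outside $\add(T_k\oplus DA)$ using condition $(3)$ of Definition~\ref{def:strictcycliclaura}, and then applies Lemma~\ref{lem:APT} to identify the kernel, rather than producing the short exact sequence from the tilting theorem over $A/\Ann T_k$ as you do; but that comparison is beside the point here, since neither version addresses the Remark.
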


Based on the previous observation, it was shown in \cite{APT} that the representation dimension of all tilted algebras is at most three as a consequence of the following lemma. 

\begin{lema}\cite[Proposition~2.2]{APT}\label{lem:APT}
Let $A$ be a tilted algebra and let $T$ be a slice module in $\mod A$.
If $X$ is a module in $\Gen T$ and $f: M \to X$ is a right $\add (T \oplus DA)$-approximation of $X$ then $\ker f \in \add T$.
\end{lema}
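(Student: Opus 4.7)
The plan is to reduce the problem to one in the module category of the hereditary algebra $B = \End_A(T)$, where the property ``submodule of a projective is projective'' will force the kernel to be projective. Throughout, I would use the following consequences of $T$ being a slice in the tilted algebra $A$: $T$ is a tilting module, $B$ is hereditary, $(\Gen T, T^{\perp})$ is a torsion pair in $\mod A$, and the Brenner--Butler theorem furnishes an equivalence $F = \Hom_A(T, -) \colon \Gen T \xrightarrow{\sim} \mathcal{Y}$, where $\mathcal{Y}$ is the torsion-free class of an associated torsion pair in $\mod B$ and $F(T) = B$.

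Decompose $M = T_M \oplus I_M$ with $T_M \in \add T$ and $I_M \in \add DA$, and set $K := \ker f$. First I would verify that $f$ is surjective: since $X \in \Gen T$ there is a surjection $T^n \twoheadrightarrow X$, which factors through $f$ by the approximation property. Then, applying $\Hom_A(T, -)$ to the resulting exact sequence $0 \to K \to M \to X \to 0$ and invoking the surjectivity of $\Hom_A(T, M) \to \Hom_A(T, X)$ (approximation property), the vanishing of $\Ext^1_A(T, T_M)$ (Ext-rigidity of $T$), and the vanishing of $\Ext^1_A(T, I_M)$ ($I_M$ is injective), one deduces $\Ext^1_A(T, K) = 0$. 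Hence $K \in \Gen T$; moreover, since $\Ext^1_A(T, I) = 0$ for every injective $A$-module $I$, the module $I_M$ itself lies in $\Gen T$, so the entire short exact sequence is contained in $\Gen T$.

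Applying $F$ then yields a short exact sequence $0 \to F(K) \to F(M) \to F(X) \to 0$ in $\mathcal{Y}$, in which $F(T_M) \in \add B$ is projective. In the hereditary algebra $B$, proving $F(K) \in \add B$ is equivalent to embedding $F(K)$ into a projective module, and the inverse equivalence would then yield $K \in \add T$ as desired. The main obstacle is controlling the injective summand $F(I_M)$ of $F(M)$: since $I_M$ is generally not in $\add T$, the module $F(I_M) = \Hom_A(T, I_M)$ need not be projective in $\mod B$, so one cannot immediately deduce that $F(K)$ embeds in a projective. To resolve this, I would exploit the identification $\Hom_A(T, DA) \cong DT$ of right $B$-modules, together with a case-by-case study of the positions of the indecomposable summands of $I_M$ in the Auslander--Reiten quiver relative to the complete slice determined by $T$, aiming to show that the composition $F(K) \hookrightarrow F(M) \twoheadrightarrow F(T_M)$ is injective. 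This is the step where the slice hypothesis (as opposed to ``merely tilting'') is essential, and is therefore the main subtlety of the argument.
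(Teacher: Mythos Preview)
The paper does not supply a proof of this lemma; it is quoted verbatim from \cite[Proposition~2.2]{APT} and used as a black box in the proof of Theorem~\ref{teo principal}. So there is no in-paper argument to compare your attempt against, and your proposal must stand on its own.

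Your reduction is sound: $f$ is surjective, $\Ext^1_A(T,K)=0$ gives $K\in\Gen T$, and Brenner--Butler transports the short exact sequence into the torsion-free class $\mathcal{Y}\subset\mod B$ with $B$ hereditary and $F(T_M)$ projective. You are also right that the entire difficulty is concentrated in the summand $F(I_M)$, which need not be projective. But the proposal stops exactly where the proof has to begin. Saying that you would carry out ``a case-by-case study of the positions of the indecomposable summands of $I_M$ in the Auslander--Reiten quiver relative to the complete slice'' is not an argument; you give no mechanism that forces the composite $F(K)\hookrightarrow F(M)\twoheadrightarrow F(T_M)$ to be injective, and in particular no reason why $\ker(f|_{I_M})$ should vanish. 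The slice hypothesis is indeed what one must exploit, but you have not said how.

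There is a second, more elementary gap. The statement as phrased (and as you treat it) does not assume $f$ minimal. If $I$ is any indecomposable injective in $\Gen T\setminus\add T$, then $f\oplus 0\colon M\oplus I\to X$ is again a right $\add(T\oplus DA)$-approximation whose kernel $\ker f\oplus I$ visibly fails to lie in $\add T$. So at the very least you must pass to the minimal approximation before anything else, and the ``$f|_{I_M}$ injective'' heuristic you sketch cannot hold without that reduction.
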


\begin{teo}\cite[§2.3]{APT}
    Let $A$ be a tilted algebra and let $T$ be a slice module in $\mod A$. Then the module $M_T:=A\oplus DA \oplus T$ is an Auslander generator for $\mod A$. 
    Moreover, $gl. dim. (\End_A(M_T))\leq 3$.
\end{teo}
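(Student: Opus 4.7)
The plan is to use Remark \ref{rmk:sec}: since $M_T = A \oplus DA \oplus T$ visibly contains both a generator and a cogenerator of $\mod A$, it suffices to exhibit, for every $X \in \mod A$, a right $\add M_T$-approximation $f_X : M_0 \to X$ whose kernel lies in $\add M_T$. I would then split the argument according to the torsion pair $(\Gen T, \mathcal{F})$ induced on $\mod A$ by the tilting module $T$, where $\mathcal{F} = \{Y \in \mod A : \Hom_A(T, Y) = 0\}$, handling torsion and torsion-free modules separately before gluing via a horseshoe-type construction.

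For $X \in \Gen T$, I would take a right $\add(T \oplus DA)$-approximation $f : M \to X$; Lemma \ref{lem:APT} immediately gives $\ker f \in \add T \subseteq \add M_T$. Because $X$ is generated by $T$, the map $f$ is an epimorphism, and since $A$ is projective, every morphism from a summand of $A$ into $X$ factors through $f$. Hence $f$ is in fact already a right $\add M_T$-approximation, and this case is settled using the APT lemma essentially for free.

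For $X \in \mathcal{F}$, I would invoke the Brenner–Butler correspondence: over a tilted algebra, the torsion-free class $\mathcal{F}$ consists of modules of projective dimension at most one. Thus the projective cover sequence $0 \to \Omega X \to P(X) \to X \to 0$ has $\Omega X \in \add A$. Since $\Hom_A(T, X) = 0$, no nonzero map from $T$ enters any approximation, and enlarging $P(X)$ by a suitable injective summand to absorb all maps from $\add DA$ yields a right $\add M_T$-approximation of $X$ whose kernel remains in $\add A \subseteq \add M_T$.

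For a general $X$, I would apply the canonical torsion sequence $0 \to tX \to X \to X/tX \to 0$ and splice the two approximations obtained above using a horseshoe-type construction. The main obstacle will be to verify that the glued map is genuinely a right $\add M_T$-approximation of $X$ itself, and not merely of its two graded pieces; this requires checking that morphisms from $\add M_T$ into $X$ can be lifted to the middle term of the spliced diagram, using that $\Hom_A(T,-)$ and projectivity of $A$ behave well with respect to the torsion sequence, and that the resulting kernel is in $\add M_T$ rather than an arbitrary extension of the two individual kernels.
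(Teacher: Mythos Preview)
The paper does not prove this theorem; it is quoted from \cite{APT} as a preparatory result, so there is no in-paper argument to compare against. Your outline is essentially correct, but you are working harder than necessary in two places.

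First, in your torsion-free case you worry about absorbing maps from $\add DA$. This is a non-issue. Every injective lies in $\Gen T$ (since $T$ is tilting), and $\Gen T$ is closed under quotients while $\mathcal{F}$ is closed under submodules, so any map $I \to X$ with $I$ injective has image in $\Gen T \cap \mathcal{F} = 0$. Hence $\Hom_A(DA, X) = 0$, and the projective cover $P(X) \to X$ is already the required $\add M_T$-approximation, with kernel $\Omega X \in \add A$.

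Second, and more importantly, the horseshoe in your general case is unnecessary. A complete slice $T$ in a tilted algebra induces a \emph{split} torsion pair: every indecomposable module lies either in $\Gen T$ or in $\mathcal{F}$, so every module decomposes as $tX \oplus (X/tX)$. Since approximations are additive, the direct sum of the two approximations you built already handles arbitrary $X$, with no gluing required. If you insist on the horseshoe route it does go through, but only because the kernel on the torsion-free side is projective: the extension $0 \to K_1 \to K \to K_0 \to 0$ with $K_0 \in \add A$ splits, forcing $K \cong K_1 \oplus K_0 \in \add M_T$. You flagged exactly this as ``the main obstacle'' without resolving it; the projectivity of $K_0$ is the missing observation.
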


In a similar vein as in the results above, we show that under certain conditions, the representation dimension of algebras determined by $\tau$-slices is at most $3$.

\begin{teo}\label{teo principal}
Let $A$ be an algebra determined by $\tau$-slices.
If $\bigcup_{j=1}^t \cogen S_j$ is contained in $\L_A$ or $\bigcup_{i=1}^s \Gen T_i$ is contained in $\R_A$ then  $\repdim A \leq 3$ with Auslander generator
$$L:= A \oplus DA \oplus \left( \bigoplus_{i=1}^s T_i \right) \oplus \left( \bigoplus_{j=1}^t S_j \right) \oplus \left( \bigoplus_{Y \in \Y_A} Y  \right).$$
\end{teo}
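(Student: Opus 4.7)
The plan is to verify the criterion of Remark~\ref{rmk:sec}: since $A\oplus DA\in\add L$, the module $L$ is automatically a generator-cogenerator, so it suffices to exhibit, for every $X\in\mod A$, a short exact sequence $0\to L_1\to L_0\to X\to 0$ with $L_0,L_1\in\add L$ and $L_0\to X$ a right $\add L$-approximation. The Remark following Definition~\ref{def:strictcycliclaura}, combined with condition~(1), shows that $\bigcup_i\Gen T_i$ and $\bigcup_j\cogen S_j$ are disjoint subcategories of $\ind A$; together with condition~(2) this yields the partition
$$
\ind A \;=\; \Y_A \;\sqcup\; \Bigl(\bigcup_{i=1}^s \Gen T_i\Bigr) \;\sqcup\; \Bigl(\bigcup_{j=1}^t \cogen S_j\Bigr),
$$
and if $X\in\Y_A$ then $X\in\add L$, so the identity on $X$ suffices.

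For $X\in\bigcup_i\Gen T_i$ under the assumption $\bigcup_i\Gen T_i\subseteq\R_A$: condition~(3) supplies an index $k$ such that, applying the factorization to the projective cover of $X$, one obtains $X\in\Gen T_k$. Since $T_k$ is a $\tau$-slice, $B:=A/\Ann T_k$ is tilted with $T_k$ a complete slice, and $X\in\mod B$. I would take $f_X\colon L_0\to X$ to be a right $\add\bigl(T_k\oplus DA\oplus\bigoplus_{Y\in\Y_A}Y\bigr)$-approximation in $\mod A$. Condition~(3) ensures that maps from $\add(A\oplus\bigoplus_{i\neq k}T_i)$ to $X$ factor through $\add T_k$, and the closure of $\bigcup_j\cogen S_j$ under predecessors forces $\Hom_A(S_j,X)=0$; hence $f_X$ is a right $\add L$-approximation. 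To conclude $\ker f_X\in\add L$, I would transfer Lemma~\ref{lem:APT} from $\mod B$ to $\mod A$: the lemma applied in $\mod B$ produces a right $\add(T_k\oplus DB)$-approximation whose kernel lies in $\add T_k$, and the hypothesis $\id_A X\leq 1$ (from $X\in\R_A$) would be exactly what allows one to bridge the $DA$-vs-$DB$ discrepancy and lift the kernel condition to $\mod A$.

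The identity $\repdim A=\repdim A^{op}$, combined with the self-duality of the class of algebras determined by $\tau$-slices (the duality $D\colon\mod A\to\mod A^{op}$ sends the data $(\{T_i\},\{S_j\})$ witnessing that $A$ belongs to the class to the dual data $(\{DS_j\},\{DT_i\})$ witnessing it for $A^{op}$), means that the two hypotheses of the theorem are interchanged under $D$. Consequently, the case $X\in\bigcup_j\cogen S_j$ under $\bigcup_j\cogen S_j\subseteq\L_A$ is handled by the symmetric argument, invoking condition~$(3^{op})$, the tilted algebra $A/\Ann S_k$ (for the $k$ provided by $(3^{op})$), and the dual of Lemma~\ref{lem:APT}; this completes the verification for all three parts of the partition.

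The principal technical obstacle I anticipate is precisely the transfer of Lemma~\ref{lem:APT} from $\mod B$ to $\mod A$ in verifying $\ker f_X\in\add L$: the $B$-injectives $DB$ appearing in the APT approximation are not in general $A$-injective, so they cannot be identified with $DA$-summands of $L_0$ in a purely formal way. The hypothesis $\bigcup_i\Gen T_i\subseteq\R_A$ enters exactly at this point, and the finiteness of $\Y_A$ guaranteed by condition~(2) keeps the analysis of the maps $\Y_A\to X$ tractable rather than open-ended.
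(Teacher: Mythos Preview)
There is a structural gap in how you match cases with hypotheses. The theorem assumes that \emph{one} of the two containments holds, and under that single assumption you must handle \emph{all three} pieces of the partition of $\ind A$. Your outline instead pairs each nontrivial piece with a different hypothesis: $X\in\bigcup_i\Gen T_i$ is treated only under $\bigcup_i\Gen T_i\subseteq\R_A$, and $X\in\bigcup_j\cogen S_j$ only under $\bigcup_j\cogen S_j\subseteq\L_A$. The duality $\repdim A=\repdim A^{\mathrm{op}}$ does exchange the two hypotheses, but it does not let you assume both simultaneously; after reducing to, say, the $\L_A$-hypothesis, you still owe an argument for $X\in\bigcup_i\Gen T_i$, and your proposal supplies none under that hypothesis. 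Note also that the ``symmetric argument'' via the dual of Lemma~\ref{lem:APT} would yield a \emph{left} $\add(A\oplus S_k)$-approximation with cokernel in $\add S_k$; this cannot be combined with right approximations for the other pieces in the criterion of Remark~\ref{rmk:sec}.

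The paper distributes the work differently. Assuming $\bigcup_j\cogen S_j\subseteq\L_A$, the case $X\in\bigcup_i\Gen T_i$ is handled by the APT argument \emph{without} any $\R_A$-assumption: condition~(3) is used to strip from a right $\add L$-approximation every summand not already in $\Gen T_k$, after which the surviving map lives entirely in $\mod(A/\Ann T_k)$ and Lemma~\ref{lem:APT} applies there directly. The ``$DA$-vs-$DB$'' transfer you flag as the main obstacle is thus absorbed by condition~(3), not by $\id_A X\leq1$. The hypothesis $\bigcup_j\cogen S_j\subseteq\L_A$ is spent instead on the \emph{other} piece: for $X\in\bigcup_j\cogen S_j\setminus\add(\bigoplus_j S_j)$ one argues that $\Hom_A(\Y_A,X)=\Hom_A(T_i,X)=\Hom_A(DA,X)=0$, so the minimal projective cover of $X$ is already a right $\add L$-approximation, and its kernel is projective because $\pd_A X\leq1$. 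This is the step your outline is missing.
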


\begin{proof}
We only show the statement under the assumption that $\bigcup_{j=1}^t \cogen S_j \subset \L_A$ since the other case follows from dual arguments.
Clearly $L$ is a generator-cogenerator of $\mod A$ since $A$ and $DA$ are direct summands of $L$.
Following Remark~\ref{rmk:sec} we need to find a right $\add L$-approximation $f_X: M_0 \to X$ where $\ker f_X \in \add L$.

Let $X$ be an $A$-module.
Since approximations are additive, without loss of generality,  we can assume that $X$ is indecomposable.
Since $A$ is an algebra determined by $\tau$-slices then $X$ either belongs to $\bigcup_{j=1}^t \cogen S_j$, to $\Y_A$ or to $\bigcup_{i=1}^s \Gen T_i$.

If $X \in \Y_A \cup \add \left( \bigoplus_{j=1}^t S_j\right)$ then it is easy to see that the identity map $1_X: X \to X$ is a right {$\add L$-approximation} satisfying the conditions of Remark~\ref{rmk:sec}.

Suppose that $X \in \left(\bigcup_{i=1}^s \Gen T_i\right)$ and take $g : M_X \to X$ a right $\add M$-approximation.
Since $A$ is an algebra determined by $\tau$-slices, there is a distinguished $k \in \{1, \dots, s\}$ such that $X \in \Gen T_k$.
Then we can write $M_X$ as $M_X = P'_X \oplus I_X \oplus I'_X \oplus Y_X \oplus T_X \oplus T'_X$ where:
\begin{itemize}
	\item $P'_X$ is the maximal projective direct summand of $M_X$ which is not in $\Gen T_k$;
	\item $I_X$  is the maximal injective direct summand of $M_X$ which is in $\Gen T_k$;
	\item $I'_X$  is the maximal injective direct summand of $M_X$ which is not in $\Gen T_k$;
	\item $Y'_X$ is the maximal direct summand of $M_X$ such that $Y_X$ is in $\add \Y_A$;
	\item $T_X$ is the maximal direct summand of $M_X$ such that $T'_X$ is in $\add T_k$;
	\item $T'_X$ is the maximal direct summand of $M_X$ such that $T''_X$ is in $ \add\{T_i \mid 1 \leq i \leq s\}\setminus \add T_k$.
\end{itemize}
Note that we do not need to consider the maximal projective direct summands of $M_X$ that is an object in $\Gen T_j$, since every such projective module is a direct summand of $T_j$ itself.
Rearranging the summands we set $N_X := I_X \oplus T_X$ and $N'_X :=  P'_X \oplus I'_X \oplus T'_X$ and we can write $g : M_X \to X$ as $[h, h'] : N_X \oplus N'_X \to X$.

The map $h': N'_X \to X$ factors through a map $s : S_X \to X$ where $S_X$ is in $\add T_j$ by \ref{def:strictcycliclaura}.$(3)$. 
Since $g : M_X \to X$ is a right $\add M$-approximation of $X$, it is easy to check that so is the map $[h, s] : N_X \oplus S_X \to X$.
Moreover, we have that $N_X \oplus S_X$ is an object in $\add (T_j \oplus DA)$ where the maximal injective direct summand of $N_X \oplus S_X$ is in $\Gen T_j$.
Now, it has been shown in \cite[Corollary~3.8]{Treffinger} that the algebra $A/ \Ann T_j$ is a tilted algebra having $T_j$ as a slice module.
Hence, we can apply Lemma~\ref{lem:APT} to conclude that $\ker [h,s]$ is in $\add T_j$.

The last case to consider is when $X$ is an indecomposable module of $\bigcup_{j=1}^t\cogen S_j \subset \L_A$ which is neither an object in $\Y_A$ nor in $\bigcup_{i=1}^s \Gen T_i$.
Then $\Hom_A (Y, X)=0$ for every $Y \in \Y_A$ because $\L_A$ is closed under predecessors. 
Similarly $\Hom_A (T, X)=0$ for every $T \in \bigcup_{i=1}^s \Gen T_i$ by Definition~\ref{def:strictcycliclaura}.$(1)$.

We claim that $\Hom_A(DA, X)=0$. 
Let $I$ be an indecomposable injective module in $\bigcup_{j=1}^t\cogen S_j$. Then $I\in \add(\bigcup_{j=1}^t S_j)$ since $\add(\bigcup_{j=1}^t S_j)$ is the category of $\Ext$-injective modules in the subcategory $\bigcup_{j=1}^t\cogen S_j$.
By hypothesis, $\bigcup_{j=1}^t\cogen S_j \subset \L_A$ and by the dual of Lemma.\ref{A_rho} we get that $\Hom_A(I, X)=0$ and the claim follows.

Since $X\in \L_A$ we have $\pd_A X \leq 1$.
Therefore the minimal projective cover $\pi_X: P^X_0 \to X$ is a right $\add L$-approximation of $X$ whose kernel belongs to $\add L$. 
This finishes the proof.
\end{proof}

We recall that the weak representation dimension $\wrepdim A$ of an artin algebra $A$ is the infimum of the global dimensions of the endomorphism algebras of the generators of $\mod A$. 
Clearly, $\wrepdim A \leq  \repdim A$. 
Thus, the next corollary follows immediately from our Theorem \ref{teo principal}.

\begin{coro}
Let $A$ be an algebra determined by $\tau$-slices, then $\wrepdim A \leq 3$.
\end{coro}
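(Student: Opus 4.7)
The plan is to verify the criterion of Remark~\ref{rmk:sec}: since $A$ and $DA$ are direct summands of $L$, the module $L$ is a generator-cogenerator of $\mod A$, so to conclude $\repdim A \leq 3$ it suffices to produce, for each indecomposable $X \in \mod A$, a right $\add L$-approximation $f_X : M_0 \to X$ with $\ker f_X \in \add L$. I will argue under the assumption $\bigcup_{j=1}^t \cogen S_j \subset \L_A$; the other hypothesis yields the conclusion by dual arguments. Definition~\ref{def:strictcycliclaura}.$(2)$ partitions $\ind A$ into the pieces $\Y_A$, $\bigcup_{i=1}^s \Gen T_i$, and $\bigcup_{j=1}^t \cogen S_j$, and I will treat these cases separately. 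When $X$ already lies in $\add L$ (for instance when $X \in \Y_A$ or when $X$ is a summand of some $T_i$ or $S_j$), the identity $1_X$ is an approximation with zero kernel, so the problem reduces to the two nontrivial cases.

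Suppose $X$ is an indecomposable object of $\bigcup_{i=1}^s \Gen T_i$ that is not already a summand of $L$. By Definition~\ref{def:strictcycliclaura}.$(3)$ there is a distinguished index $k$ such that every morphism from a module in $\add(A \oplus DA \oplus \bigoplus_{i \neq k} T_i)$ having no injective summand in $\Gen T_k$ into $X$ factors through $\add T_k$. Taking any right $\add L$-approximation $g : M_X \to X$, I will split $M_X = N_X \oplus N'_X$, where $N_X$ collects the summands already lying in $\add T_k$ together with the injective summands of $M_X$ that lie in $\Gen T_k$, and $N'_X$ collects the remaining summands. The Hom-vanishing in condition $(1)$, combined with the closure of $\bigcup_i \Gen T_i$ under successors established in the Remark after Definition~\ref{def:strictcycliclaura}, forces the $\Y_A$ and $S_j$ summands of $N'_X$ to contribute trivially, leaving only summands in $\add(A \oplus DA \oplus \bigoplus_{i \neq k} T_i)$, which condition $(3)$ then factors through some $S_X \in \add T_k$. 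The resulting map $[h, s] : N_X \oplus S_X \to X$ is still a right $\add L$-approximation whose source lies in $\add(T_k \oplus DA)$ with maximal injective summand in $\Gen T_k$. Since $A / \Ann T_k$ is tilted with complete slice $T_k$, Lemma~\ref{lem:APT} then gives $\ker[h, s] \in \add T_k \subseteq \add L$.

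The final case is an indecomposable $X \in \bigcup_{j=1}^t \cogen S_j$ that does not already appear as a summand of $L$. I will show that the minimal projective cover $\pi_X : P_0^X \to X$ is itself a right $\add L$-approximation whose kernel lies in $\add L$. Since $X \in \L_A$ by hypothesis, $\pd_A X \leq 1$ and $\ker \pi_X$ is projective, hence in $\add A \subseteq \add L$. To upgrade $\pi_X$ to an $\add L$-approximation, I must check that every map into $X$ from any other summand type of $L$ vanishes: maps from $\bigcup_i \Gen T_i$ die by condition $(1)$ together with the closure of $\bigcup_i \Gen T_i$ under successors; maps from $\Y_A$ die because $\L_A$ is closed under predecessors; and maps from $DA$ vanish because every indecomposable injective in $\bigcup_j \cogen S_j$ is an $\Ext$-injective in that subcategory and hence a summand of some $S_j$, and the dual of Lemma~\ref{A_rho} prevents such an injective from mapping nonzero into $X$.

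The main obstacle will be the $\Gen T_i$ case: isolating precisely which summands of an arbitrary right $\add L$-approximation need to be rerouted through $T_k$ by condition $(3)$, checking that the Hom-vanishing of condition $(1)$ genuinely eliminates the $\Y_A$ and $S_j$ contributions, and verifying that after these replacements the resulting morphism is still a right $\add L$-approximation whose source has the exact shape required by Lemma~\ref{lem:APT}. Once this bookkeeping is done, the reduction to the tilted algebra $A/\Ann T_k$ is automatic, and the remaining cases handle themselves from the hypothesis $\bigcup_j \cogen S_j \subset \L_A$.
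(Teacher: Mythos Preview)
Your write-up is not a proof of the corollary but a faithful reproduction of the proof of Theorem~\ref{teo principal}: the same Auslander generator $L$, the same three-case split on indecomposables, the same use of Definition~\ref{def:strictcycliclaura}.$(3)$ to reroute through $\add T_k$, the same appeal to Lemma~\ref{lem:APT} over the tilted quotient $A/\Ann T_k$, and the same handling of $\cogen S_j$ via projective covers under the hypothesis $\bigcup_j \cogen S_j \subset \L_A$. All of this establishes $\repdim A \leq 3$, and you then stop without ever mentioning $\wrepdim$, so the stated conclusion is technically never reached.

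The paper's own proof of this corollary is a single line: by definition every generator-cogenerator is in particular a generator, hence $\wrepdim A \leq \repdim A$, and Theorem~\ref{teo principal} (already proved immediately above) gives $\repdim A \leq 3$. There is nothing to redo. Your argument becomes a correct, if massively redundant, proof once you append that trivial inequality; note also that both your argument and the paper's derivation rely on the extra hypothesis of Theorem~\ref{teo principal}, which the corollary inherits implicitly even though it is not restated.
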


It was shown in \cite{IT} that, if an artin algebra $A$ verifies $\repdim A \leq 3$ then its finitistic dimension $\findim A$ is finite. 
We thus obtain the following corollary.

\begin{coro}
Let $A$ be an algebra determined by $\tau$-slices, then $\findim A < \infty$.
\end{coro}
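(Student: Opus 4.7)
The plan is essentially a one-step deduction from the two inputs already assembled in the paper. First I would observe that, under the mild hypothesis of Theorem~\ref{teo principal} (which the corollary inherits, just as the preceding corollary on $\wrepdim A$ does), we already have $\repdim A \leq 3$; this is precisely what Theorem~\ref{teo principal} delivers, with the explicit Auslander generator $L = A \oplus DA \oplus \bigoplus_i T_i \oplus \bigoplus_j S_j \oplus \bigoplus_{Y \in \Y_A} Y$.

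Second, I would invoke the theorem of Igusa and Todorov from \cite{IT}, which asserts that any artin algebra $A$ satisfying $\repdim A \leq 3$ has $\findim A < \infty$. Applying this result to the bound from Theorem~\ref{teo principal} gives the conclusion immediately. Concretely, the proof body consists of a single sentence: ``By Theorem~\ref{teo principal}, $\repdim A \leq 3$, and so by \cite{IT}, $\findim A < \infty$.''

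There is no genuine obstacle here; the corollary is a formal consequence chaining two external results. The only point that deserves a brief remark is the same hypothesis-inheritance issue as before: the finiteness of $\findim A$ is obtained from $\repdim A \leq 3$, which in turn requires the containment $\bigcup_{j=1}^t \cogen S_j \subseteq \L_A$ or $\bigcup_{i=1}^s \Gen T_i \subseteq \R_A$. All the substantial mathematics — the existence of the Auslander generator $L$, the use of Lemma~\ref{lem:APT} on the tilted quotients $A/\Ann T_j$, and the handling of objects in $\L_A$ via projective covers — is already absorbed in the proof of Theorem~\ref{teo principal}, so nothing further is needed here.
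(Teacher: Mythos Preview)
Your proposal is correct and matches the paper's own justification exactly: the corollary is obtained by combining the bound $\repdim A \leq 3$ from Theorem~\ref{teo principal} with the Igusa--Todorov result \cite{IT}. Your remark about the implicit inheritance of the hypothesis from Theorem~\ref{teo principal} is also apt, as the paper states the corollary without restating that hypothesis.
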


\section*{Ackowledgements}
The authors thank Ibrahim Assem for the valuable comments and Marcelo Lanzilotta for suggesting a more appropriate name for the family of algebras we introduce in this paper.

The authors gratefully acknowledge financial support from CSIC (Comisión Sectorial de Investigación Científica) of Uruguay (grant no. 22520220100067UD).

HT acknowledges that this work was completed in spite of Argentina's current science policies, which severely hinder basic research in the country.

\end{document}